\newtheorem{theorem}{Theorem}
\newtheorem{proposition}[theorem]{Proposition}
\newtheorem{definition}[theorem]{Definition}
\newtheorem{lemma}[theorem]{Lemma}
\newtheorem{corollary}[theorem]{Corollary}
\theoremstyle{remark}
\newtheorem{remark}[theorem]{Remark}
\newtheorem{example}[theorem]{Example}
\newcommand{\mat}[1]{{\boldsymbol{#1}}}
\newcommand{\vect}[1]{{\boldsymbol{#1}}}
\newcommand{\tp}{\mathrm{T}}
\newcommand{\herm}{\mathrm{H}}
\newcommand{\rmi}{{\mathrm{i}}}
\newcommand{\rmj}{{\mathrm{j}}}
\newcommand{\rmk}{{\mathrm{k}}}
\newcommand{\rme}{{\mathrm{e}}}
\newcommand{\bv}{\vect{v}}
\newcommand{\bw}{\vect{w}}
\newcommand{\bx}{\vect{x}}
\newcommand{\by}{\vect{y}}
\newcommand{\bSigma}{\mat{\Sigma}}
\newcommand{\bLambda}{\mat{\Lambda}}
\newcommand{\bA}{\mat{A}}
\newcommand{\bB}{\mat{B}}
\newcommand{\bD}{\mat{D}}
\newcommand{\bG}{\mat{G}}
\newcommand{\bI}{\mat{I}}
\newcommand{\bM}{\mat{M}}
\newcommand{\bP}{\mat{P}}
\newcommand{\bQ}{\mat{Q}}
\newcommand{\bV}{\mat{V}}
\newcommand{\bU}{\mat{U}}
\newcommand{\bX}{\mat{X}}
\newcommand{\bY}{\mat{Y}}
\newcommand{\bR}{\mat{R}}
\newcommand{\bE}{\mat{E}}
\newcommand{\setR}{{\mathbb R}}
\newcommand{\setH}{{\mathbb H}}
\newcommand{\setC}{{\mathbb C}}
\newcommand{\setZ}{{\mathbb Z}}
\newcommand{\setA}{{\mathbb A}}
\newcommand{\setK}{{\mathbb K}}
\DeclareMathOperator{\trace}{tr}
\DeclareMathOperator{\sign}{sgn}
\DeclareMathOperator{\mathcalV}{\mathcal{V}}
\DeclareMathOperator*{\argmax}{arg\,max}
\journal{Linear Algebra and its Applications}
\begin{document}
\begin{frontmatter}
	
\title{The QRD and SVD of matrices over a real algebra \tnoteref{star}}

\author{Paul Ginzberg\corref{mycorrespondingauthor}}
\cortext[mycorrespondingauthor]{Corresponding author}
\ead{paul.ginzberg05@imperial.ac.uk}

\author{Christiana Mavroyiakoumou\corref{cor1}}
\ead{christiana.mavroyiakoumou13@imperial.ac.uk}
\address{Department of Mathematics, Imperial College London, South Kensington Campus, SW7 2AZ London, UK}
\tnotetext[star]{This work was supported by an Undergraduate Research Opportunity Program funding from the Department of Mathematics, Imperial College London.}

\begin{abstract}
Recent work in the field of signal processing has shown that the singular value decomposition of a matrix with entries in certain real algebras can be a powerful tool. 
In this article we show how to generalise the QR decomposition and SVD to a wide class of real algebras, including all finite-dimensional semi-simple algebras, (twisted) group algebras and Clifford algebras. Two approaches are described for computing the QRD/SVD: one Jacobi method with a generalised Givens rotation, and one based on the Artin-Wedderburn theorem.
\end{abstract}
\begin{keyword}
\MSC[2000]
15A33 
\sep 15A18 
\sep 15A66 
\sep 16S34 
\sep 16S35 
\sep 65F25 
\sep 65F30 

singular value decomposition \sep QR decomposition \sep Laurent polynomial \sep quaternion \sep Clifford algebra \sep twisted group algebra.
\end{keyword}

\end{frontmatter}

\section{Introduction}

The singular value decomposition (SVD) is one of the most commonly used matrix decompositions. In particular, the eigenvalue decomposition (EVD) of a symmetric positive semi-definite matrix (e.g. a covariance matrix) is equal to its SVD. Although the SVD is most commonly applied to real-valued matrices, SVDs of complex-valued matrices are common in signal processing, with the complex number $r \rme^{i \phi}$ representing a narrowband wave with amplitude $r$ and phase $\phi$. Recently in the signal processing literature, generalisations of the SVD/EVD to some other algebras have been suggested, most notably the algebra of Laurent polynomials $\setR[z,z^{-1}]$  \citep{mcwhirter_evd_2007,foster_algorithm_2010} and the algebra of quaternions $\setH$ \citep{miron_quaternion-music_2006}, but also
biquaternions $\setH \otimes \setC$ \citep{le_bihan_music_2007}, reduced quaternions  \citep{gai_denoising_2015,gai_reduced_2014}, quaternion Laurent polynomials $\setH[z,z^{-1}]$ \citep{menanno_quaternion_2010}, and quad-quaternions $\setH \otimes \setH$ \citep{gong_quad-quaternion_2008,xiao_direction_2014}.



 In this article we propose two methods for computing the SVD (and QRD) over a general real algebra $\setA$. The first method generalises the approach of \citet{foster_algorithm_2010} and can be applied to many finite and infinite-dimensional real algebras, including all (twisted) group algebras. The second approach applies to finite-dimensional semi-simple algebras and uses an appropriate representation of the algebra as in \citet{gai_denoising_2015} to reduce the problem to parallel real, complex or quaternion SVDs. Although both approaches have mild technical conditions on the algebra, these conditions are satisfied by all Clifford algebras $C\ell(p,q)$, so that the SVD of a matrix in $C\ell(p,q)^{m \times n}$ can be computed.
 
Yet another method for the computation of matrix decompositions in an algebra is given by \citet{lang_self-adaptive_2012}, which obtains the quaternion PCA of a quaternion matrix  from the complex PCA of its complex matrix representation, by computing 
a large number of
 Moore-Penrose inverses. The authors believe that this third approach is very inefficient and hence it will not be generalised or explored further in this article.

Section~\ref{sec:preliminaries} provides some preliminary definitions and assumptions. Section~\ref{sec:givens} introduces a generalised Givens rotation and uses it to define a Jacobi QR by columns algorithm (Algorithm~\ref{code:QR}). Section~\ref{sec:SVD} describes the SVD by QR algorithm (Algorithm~\ref{code:SVD}). Section~\ref{sec:semisimple} describes a second approach to computing the SVD/QRD which is based on the algebra representation from the Artin-Wedderburn theorem. Finally, Section~\ref{sec:examples} demonstrates the usefullness of our general framework by considering three particular examples of algebra: multivariate Laurent polynomials and the conformal geometric algebra, for which no previous SVD algorithm exists, and quad-quaternions, for which our theory improves the existing SVD algorithm.

\section{Preliminaries}\label{sec:preliminaries}
The following preliminary constructs and assumptions are introduced so that we may define the SVD on an algebra in a way which relates simply to the usual real SVD.
\begin{definition}
$\setA$ is a $d$-dimensional algebra over the field $\setK$ if it is a $d$-dimensional vector space over $\setK$,
with a multiplication satisfying $\forall x,y,z \in \setA,\ \forall r,s \in \setK$
\begin{eqnarray*}
&&x(y+z)=xy+xz,\\
&&(y+z)x=yx+zx,\\
&&(rx)(sy)=(rs)(xy).
\end{eqnarray*}

$\setA$ is associative if furthermore $x(yz)=(xy)z$.

$\setA$ is unital if it contains a multiplicative identity. 
\end{definition}

When referring to an algebra we will henceforth assume that the field is $\setK=\setR$, and that the algebra is unital and associative (assumption \textbf{A0}). In an abuse of notation we will denote all multiplicative identities by $1$, the meaning being clear from context. In particular, we will identify $r \in \setR$ with $r 1 = r \in \setA$.

A unital associative real algebra can equivalently be defined as a ring which contains $\setR$ as a subring of its centre.

Let $\setA$ be a $d$-dimensional algebra with (ordered) basis $\mathcal{B}=\{e_1,\ldots,e_d\}\subset \setA$. The basis defines a vector-space isomorphism $\mathcal{V} : \setA \rightarrow \setR^d$; namely
$
\mathcal{V}(a_1 e_1+\ldots+a_d e_d)=\left(a_1,\ldots,a_d\right)^\tp
$.
The vector-space isomorphism in turn defines an injective algebra homomorphism $\tilde{\bullet}: \setA \rightarrow \setR^{d \times d}$; namely for $a \in \setA$, $\tilde{a}$ is the unique linear transformation such that $\forall x \in \setA\ \tilde{a}\mathcal{V}(x)=\mathcal{V}(ax)$. We will henceforth refer to $\tilde{\setA}=\{ \tilde{a}:a \in\setA\} \subseteq \setR^{d \times d}$ as ``the'' real matrix representation (RMR) of $\setA$. $\tilde{\setA}$ is itself an algebra isomorphic to $\setA$, and hence we could assume without loss of generality that $\setA$ is a subalgebra of $\setR^{n \times n}$.

The notation used in this article sometimes implicitly assumes $d<\infty$ for simplicity. However the results in Sections~\ref{sec:preliminaries},\,\ref{sec:givens}\,\&\,\ref{sec:SVD} remain applicable when $d$ is countably infinite. In the case $d=\infty$ we must however assume that $\setA$ is the (finite not closed) span of its basis elements, i.e.\ every element of $\setA$ has finitely-many non-zero coefficients. Without this assumption, even simple addition may require an infinite number of operations.

We will for the rest of Sections~\ref{sec:preliminaries}\,\&\,\ref{sec:givens} make the assumption:
\begin{description}
\item[A1:] $e_1=1$. (For the RMR $\tilde{\setA}$ we have $\tilde{e_1}=\bI_d=1$ since we identify $1$ with the multiplicative identity.)
\end{description}
A1 allows us to define the real part $\Re(a_1 e_1+\ldots+a_d e_d)=a_1$. We will typically have $\Re(a)=\frac{1}{d}\trace(\tilde{a})$, and take this to be the definition of $\Re$ when the basis does not satisfy A1. The two definitions of $\Re$ are equivalent whenever $\bI_d$ is orthogonal to the other basis elements $\tilde{e_i}$ in $\setR^{d \times d}$.

Since $\setA$ is isomorphic to $\setR^d$ as a vector space, for $a \in \setA$ we may define the usual Euclidean norm $\|a\|_2=\|\mathcal{V}(a)\|_2$ and supremum norm $\|a\|_\infty=\|\mathcal{V}(a)\|_\infty$. For $\bX = (x_{ij}) \in \setA^{m \times n}$ we define the Frobenius (Euclidean) norm \linebreak $\|\bX\|_F=\left(\sum_{i=1}^m\sum_{j=1}^n \|x_{ij}\|_2^2\right)^\frac{1}{2}$ and supremum norm $\|\bX\|_\infty=\max\limits_{i,j}\|x_{ij}\|_\infty$. $\|\bullet\|$ will be used to denote an unspecified norm.
\begin{definition}
$\bar{\bullet}:\setA \rightarrow \setA$ is an involution if it satisfies $\forall x,y \in \setA$, $a \in \setR \subseteq \setA$
\begin{align*}
\overline{\left(\bar{x}\right)}&=x\\
\overline{xy}&=\bar{y}\bar{x}\\
\bar{a}&=a.
\end{align*}
An algebra with an involution is called a $*$-algebra.
\end{definition}
$\setR^{d \times d}$ is a $*$-algebra whose (standard) involution is the matrix transpose $\bullet^\tp$. In particular, $\setR$ is a $*$-algebra with the identity as its involution.
\begin{definition}
 For a real $*$-algebra $\setA$, let $\mathcal{U}(\setA)=\{ a \in \setA : \bar{a}a=1 \}$ denote the set of unitary elements.
\end{definition}
$\mathcal{U}(\setA)$ is a subset of the set of invertible elements (a.k.a.\ units), and forms a multiplicative group.
We will henceforth make the assumption:
\begin{description}
\item[A2:] $\setA$ is a $*$-algebra and $\|ba\|_2 = \|a\|_2 \ \forall a \in \setA,\ b \in \mathcal{U}(\setA)$.
\end{description}
A sufficient condition%
\footnote{It is sufficient because $\mathcal{U}(\tilde{\setA}) \subseteq \mathcal{U}(\setR^{d \times d})$.  $\mathcal{U}(\setR^{d \times d})$ is the set of $d \times d$ orthogonal matrices, and orthogonal matrices are isometries.}
 for A2 is the following assumption:
\begin{description}
\item[A3:] $\tilde{\setA}$ is closed under $\bullet^\tp$, and we endow $\setA$ with the $*$-algebra structure induced from its RMR, i.e.\ $\forall\, a \in \setA$  we define its involution $\bar{a} \in \setA$ to be the unique element satisfying $\tilde{\bar{a}}=\tilde{a}^\tp$.
\end{description}
The stronger assumption A3 makes $\tilde{\bullet}$ a $*$-algebra isomorphism between $\setA$ and $\tilde{\setA}$, so that $\setA$ is a sub-$*$-algebra of $\setR^{d \times d}$.

\begin{definition}\label{def:hermu}
For matrices $\bX \in \setA^{m \times n}$ define the Hermitian transpose as $\bX^\herm=\bar{\bX}^\tp$. A matrix $\bX \in \setA^{m \times m}$ is said to be Hermitian if $\bX = \bX^\herm$. It is said to be unitary if $\bX^\herm\bX=\bI_m$. 
\end{definition}
Note that for $\setA=\setC$ or $\setA=\setH$, Definition~\ref{def:hermu} gives us the usual definitions.

$\setA^{m \times m}$ is itself a $*$-algebra with involution $\bullet^\herm$, and the set of unitary matrices is precisely $\mathcal{U}(\setA^{m \times m})$.
\begin{definition}
A singular value decomposition (SVD) (or $\setA$SVD) of a matrix $\bX \in \setA^{m \times n}$ is a decomposition of the form $\bX=\bU \bLambda \bV^\herm$, where $\bLambda \in \setA^{m \times n}$ is diagonal, and $\bU \in \setA^{m \times m}$ and $\bV \in \setA^{n \times n}$ are unitary.
\end{definition}
\begin{definition}
A QR (or $\setA$QR) decomposition of a matrix $\bX \in \setA^{m \times n}$ is a decomposition of the form $\bX=\bQ \bR$, where $\bR \in \setA^{m \times n}$ is upper-triangular, and $\bQ \in \setA^{m \times m}$ is unitary.
\end{definition}

\section{A general class of Givens rotations}\label{sec:givens}
\subsection{The $\setA$QR by columns algorithm}
In this section we will propose a Jacobi algorithm for the QR decomposition which generalises the one by \citet{foster_algorithm_2010}. The key difference rests in describing a general approach for defining an appropriate Givens/elementary rotation for the algebra being used.

\begin{definition}
For $1 \leq i \leq m$ and $b \in \mathcal{U}(\setA)$ define the $m \times m$ shift matrix
\begin{equation*}
\bB(b,i)=\left(\begin{array}{ccc}
\bI_{i-1} &. &. \\
.& b &. \\
.&. & \bI_{m-i}
\end{array}\right).
\end{equation*}
For $1 \leq j< i \leq m$ and $b \in \mathcal{U}(\setA)$ define the $m \times m$ $\setA$-Givens rotation
\begin{eqnarray*}
\bG(\theta,b,i,j) &=&\bB(b,i)\bG(\theta,1,i,j)\bB(b,i)^\herm\\
&=&\left(\begin{array}{ccccc}
\bI_{j-1} &. &. &. &. \\
.& \cos(\theta) &. &-\sin(\theta)\bar{b} &. \\
.& .& \bI_{i-j-1} &. &. \\
.& \sin(\theta)b &. &\cos(\theta) &.\\
.& .& .&. & \bI_{m-i}
\end{array}\right),
\end{eqnarray*}
where $\bG(\theta,1,i,j)$ is the usual real Givens rotation.
\end{definition}
Every $\setA$-Givens rotation is unitary, since it is the product of a unitary $\setR$-Givens rotation $\bG(\theta,1,i,j)$ with unitary diagonal matrices. Note that $\bB(b,i)^\herm=\overline{\bB(b,i)}=\bB(\bar{b},i)$,  that $\bG(\theta,b,i,k)^\herm=\bG(-\theta,b,i,k)$ and that $\bG(0,b,i,k)=\bI_{m}$.
\begin{lemma}\label{lm:givens}
Let 
$j<i$, $\bv \in \setA^{m \times 1}$, $b \in \mathcal{U}(\setA)$, $\theta=-\mathrm{atan2}\left(\Re(\bar{b} v_i),\Re(v_j)\right)$ and $\bw=\bG(\theta,b,i,j)\bv$. 
Then 
$\Re(w_j)^2=\Re(v_j)^2+\Re(\bar{b}v_i)^2$.
\end{lemma}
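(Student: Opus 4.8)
The plan is to exploit the sparsity of the $\setA$-Givens rotation. Reading the $j$-th row of the displayed matrix for $\bG(\theta,b,i,j)$, the product $\bw=\bG(\theta,b,i,j)\bv$ changes only the $j$-th and $i$-th entries, and the $j$-th entry is
$w_j=\cos(\theta)\,v_j-\sin(\theta)\,\bar{b}v_i$.
So the first step is simply to write down this closed form for $w_j$.

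The second step is to apply the real-part map. Under assumption A1 the map $\Re$ is $\setR$-linear and satisfies $\Re(ra)=r\Re(a)$ for $r\in\setR$; since $\cos\theta,\sin\theta\in\setR$ this yields $\Re(w_j)=\cos(\theta)\Re(v_j)-\sin(\theta)\Re(\bar{b}v_i)$. Setting $p:=\Re(v_j)$ and $q:=\Re(\bar{b}v_i)$, the claim $\Re(w_j)^2=\Re(v_j)^2+\Re(\bar{b}v_i)^2$ reduces to the purely real identity $(\cos(\theta)\,p-\sin(\theta)\,q)^2=p^2+q^2$ with $\theta=-\mathrm{atan2}(q,p)$.

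The third step is to dispatch this real identity via the standard $\mathrm{atan2}$ conventions. For $(p,q)\neq(0,0)$ one has $\cos(\mathrm{atan2}(q,p))=p/\sqrt{p^2+q^2}$ and $\sin(\mathrm{atan2}(q,p))=q/\sqrt{p^2+q^2}$, so the sign flip in $\theta$ gives $\cos(\theta)=p/\sqrt{p^2+q^2}$ and $\sin(\theta)=-q/\sqrt{p^2+q^2}$, whence $\cos(\theta)\,p-\sin(\theta)\,q=(p^2+q^2)/\sqrt{p^2+q^2}=\sqrt{p^2+q^2}$ and squaring finishes it; the degenerate case $p=q=0$ gives $\theta=0$, $w_j=v_j$, $\Re(w_j)=0$, and the identity holds trivially. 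There is essentially no obstacle here: the only points meriting a line of care are the $\setR$-linearity of $\Re$ (which is what lets the scalars $\cos\theta,\sin\theta$ be pulled outside $\Re$) and the quadrant/sign conventions of $\mathrm{atan2}$, including the $(0,0)$ corner case.
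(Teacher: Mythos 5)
Your proof is correct and follows essentially the same route as the paper's: read off $w_j=\cos(\theta)v_j-\sin(\theta)\bar{b}v_i$, pull the real scalars out of $\Re$, and evaluate via the $\mathrm{atan2}$ conventions, with the degenerate case (the paper isolates $\Re(\bar{b}v_i)=0$, you isolate $(0,0)$) handled separately. No gaps; your write-up is if anything slightly more explicit about the quadrant conventions than the paper's.
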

\begin{proof}
If $\Re(\bar{b} v_i)=0$ then $w_j=\pm v_j$, so we may assume $\Re(\bar{b} v_i)\neq0$. 
\begin{align*}
\Re(w_j)^2&=\Re(v_j\cos(\theta)-\bar{b}v_i\sin(\theta))^2\\
&=\left(\Re(v_j)\cos(\theta)-\Re(\bar{b}v_i)\sin(\theta)\right)^2\\
&=\left(\Re(v_j)^2+\Re(\bar{b}v_i)^2\right)^{-1}\left(\Re(v_j)^2+\Re(\bar{b}v_i)^2\right)^2.
\end{align*}
\end{proof}

\begin{algorithm}[htbp]
\caption{$\setA$QR (By Columns) Decomposition}\label{code:QR}
\begin{algorithmic}[1]
\State \textbf{Input:} The matrix to be decomposed $\bA \in \setA^{m \times n}$
\State \textbf{Specify:} A function $\beta:\setA \rightarrow \mathcal{U}(\setA)$,\\ \hspace{4.5em}a norm $\|\bullet\|$,\\ \hspace{4.5em}and the error tolerance $\epsilon>0$.
\State\textbf{Initialise:} $\bQ \gets \bI_m$, $\bR \gets \bA$ and $g_1 \gets 1+\epsilon$.
\While{$g_1>\epsilon$} \label{line:outerloopstart}
\For{$k=1,\ldots,\min(m,n)$}
\State $b \gets \beta(r_{k,k})$ \label{line:extrastart}
\State $\bR \gets \bB(\bar{b},k) \bR$ \label{line:extramid}
\State $\bQ \gets \bQ \bB(b,k)$ \label{line:extraend}
\If{$k=m$} 
\State \textbf{break}
\EndIf
\While{1} \label{line:innerloopstart}
\State $i \gets \argmax\limits_{\ell:\ell>k} \|r_{\ell k}\|$
\State $g_2 \gets \|r_{ik}\|$
\If{$g_2 \leq \epsilon$} 
\State \textbf{break}
\EndIf
\State $b \gets \beta(r_{i,k})$
\State $\theta \gets \mathrm{atan2}(\Re(\bar{b }r_{i,k}),\Re(r_{k,k}))$
\State $\bR \gets \bG(-\theta,b,i,k) \bR$ \label{line:givens}
\State $\bQ \gets \bQ \bG(\theta,b,i,k)$
\EndWhile \label{line:innerloopend}
\EndFor
\State $g_1 \gets \max\limits_{\ell,j:\ell>j}\|r_{\ell j}\|$
\EndWhile \label{line:outerloopend}
\For{$k=1,\ldots,\min(m,n)$}\label{line:extrastart2}
\If{$\Re(r_{k,k})\neq0$}
\State $\bR \gets \bB(\sign(\Re(r_{k,k})),k) \bR$
\State $\bQ \gets \bQ \bB(\sign(\Re(r_{k,k})),k)$ 
\EndIf
\EndFor \label{line:extraend2}
\State \textbf{Output:} $\bR$ and $\bQ$
\end{algorithmic}
\end{algorithm}

Algorithm~\ref{code:QR} is based on \citet[Table I]{foster_algorithm_2010} with the following additional changes: The hard iteration limits MaxSweeps and MaxIter are removed for simplicity. Although these are not necessary, one may still wish to include them in an applied implementation of the algorithm to safeguard against excessively small choices of $\epsilon$. $\beta$ and $\|\bullet\|$ are introduced to allow for generalisation. In \citet[Table I]{foster_algorithm_2010} $\beta(p(z))=\frac{p_\ell}{\|p_\ell\|_2} z^\ell$ where in the polynomial $p(z)$, the monomial $p_\ell z^\ell$ has the largest coefficient (in absolute value), and $\|\bullet\|=\max\limits_{j \in \setZ} \|p_j\|_2$.
Our definition of a Givens rotation includes pre- and post-rotation shifting whereas they consider post-rotation shifting as a separate operation. Finally, we added lines \ref{line:extrastart}--\ref{line:extraend} to improve stability by making the real part of the diagonal entries as large as easily achievable, and lines \ref{line:extrastart2}--\ref{line:extraend2} to make the real part of the diagonal entries positive (which in certain cases ensures uniqueness).

The output from Algorithm~\ref{code:QR} satisfies $\bA=\bQ\bR$, and $\bQ$ is unitary. However, $\bR$ may be only approximately upper triangular, in the sense that every entry below the diagonal has norm at most $\epsilon$. In most applications this will be acceptable as long as a sufficiently small $\epsilon$ is chosen.

\subsection{Choosing $\beta$}

\begin{definition}\label{def:decent}
Given a norm $\|\bullet\|$, a function $\beta:\setA \rightarrow \mathcal{U}(\setA)$ is said to be \emph{decent} (or $\|\bullet\|$-decent, or $(\|\bullet\|,\rho)$-decent)  if there exists $\rho>0$ such that
\begin{align}
&\left|\Re\left(\overline{\beta(a)}a\right)\right| \geq \rho\|a\|\ \ \ \forall a \in \setA, \label{eq:betacond1}\\
&\left|\Re\left(\overline{\beta(a)}a\right)\right| \geq \left|\Re(a)\right|  \ \ \ \forall a \in \setA.\label{eq:betacond2}
\end{align}
\end{definition}
\begin{theorem}\label{thm:convergence}
If $\beta$ is $\|\bullet\|$-decent
then Algorithm~\ref{code:QR} converges.
\end{theorem}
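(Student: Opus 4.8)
The plan is to show that Algorithm~\ref{code:QR} makes monotone progress on a suitable ``off-diagonal'' potential and that this progress is bounded below in terms of the potential itself, forcing convergence. I would define, after each pass through the inner \texttt{while} loop (or equivalently after each column-sweep in the outer loop), the quantity $\Psi(\bR)=\sum_{\ell>j}\Re(r_{\ell j})^2$, or perhaps more convenient, track $\sum_{k}\Re(r_{k,k})^2$, the sum of squares of the real parts of the diagonal entries. The first observation is that each operation performed by the algorithm --- the shift $\bR\gets\bB(\bar b,k)\bR$, the $\setA$-Givens rotation $\bR\gets\bG(-\theta,b,i,k)\bR$, and the final sign fixes --- is left-multiplication by a unitary matrix, so by assumption \textbf{A2} the Frobenius norm $\|\bR\|_F$ is invariant throughout. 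Hence it suffices to show that $\sum_k\Re(r_{k,k})^2$ is non-decreasing and that it strictly increases by a definite amount whenever some subdiagonal entry has norm exceeding $\epsilon$; since $\sum_k\Re(r_{k,k})^2\le\|\bR\|_F^2$ is bounded, the algorithm must eventually reach the state where all subdiagonal entries have norm $\le\epsilon$, i.e.\ $g_1\le\epsilon$, and terminate.

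The key quantitative steps are as follows. First, the shift step at line~\ref{line:extramid} replaces $r_{k,k}$ by $\bar b\,r_{k,k}$ with $b=\beta(r_{k,k})$; by the decency condition \eqref{eq:betacond2}, $\Re(\bar b\, r_{k,k})^2=\Re(\overline{\beta(r_{k,k})}r_{k,k})^2\ge\Re(r_{k,k})^2$, so this step never decreases $\sum_k\Re(r_{k,k})^2$ (and the other diagonal entries are untouched). Second, for the Givens step I apply Lemma~\ref{lm:givens}: with $\theta=\mathrm{atan2}(\Re(\bar b\,r_{i,k}),\Re(r_{k,k}))$ and $b=\beta(r_{i,k})$, after the rotation the new $(k,k)$ entry $w$ satisfies $\Re(w)^2=\Re(r_{k,k})^2+\Re(\bar b\,r_{i,k})^2$, while the new $(i,k)$ entry has its real part unchanged in the relevant bookkeeping --- the point being that the rotation only mixes rows $i$ and $k$, the $(i,i)$ diagonal entry among the tracked diagonal squares is $r_{i,i}$ which is not in column $k$ and is only altered through row operations that, combined over the full accounting, still leave $\|\bR\|_F$ fixed; so I would instead argue directly that $\sum_k\Re(r_{k,k})^2$ increases by at least $\Re(\bar b\,r_{i,k})^2$ minus whatever is lost at position $(i,i)$. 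Cleaner: track the potential column-by-column, noting that within the inner loop on column $k$ only entries in rows $>k$ of columns $\ge k$ are modified and $r_{k,k}$ strictly grows in real-part-squared. Using \eqref{eq:betacond1}, $\Re(\bar b\,r_{i,k})^2\ge\rho^2\|r_{i,k}\|^2>\rho^2\epsilon^2$ because $i$ is chosen to maximise $\|r_{\ell k}\|$ among $\ell>k$ and the loop only continues while this maximum exceeds $\epsilon$. Thus each Givens rotation that is actually performed increases the potential by at least $\rho^2\epsilon^2$.

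The main obstacle --- and the step I would spend the most care on --- is the bookkeeping that shows the gains in $\sum_k\Re(r_{k,k})^2$ are not silently cancelled by later operations within the same sweep: a Givens rotation on column $k$ alters rows $i$ and $k$ across \emph{all} columns, including later columns $k'>k$ whose diagonal entries $r_{k',k'}$ we are also counting, and whose real parts could in principle decrease. The standard Jacobi-type resolution is to note that once column $k$ is processed its subdiagonal is (approximately) zeroed, and to set up the potential so that progress is measured on the ``already-processed'' part; alternatively one shows that the total potential $\|\bR\|_F^2$ being fixed, and the ``upper-triangular mass'' $\sum_{\ell\le j}\|r_{\ell j}\|^2$ being non-decreasing across a full sweep, forces the complementary subdiagonal mass $\sum_{\ell>j}\|r_{\ell j}\|^2$ to be non-increasing and to decrease by at least a fixed amount per sweep in which any reduction step fires. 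One then invokes the bound $\sum_{\ell>j}\|r_{\ell j}\|^2\le\|\bA\|_F^2<\infty$ to conclude that only finitely many sweeps can contain a step with $g_2>\epsilon$, after which $g_1\le\epsilon$ and the outer loop exits; the finitely many trailing shift/sign operations on lines~\ref{line:extrastart2}--\ref{line:extraend2} clearly terminate. Care is also needed to confirm the inner \texttt{while} loop itself terminates for each fixed $k$ --- this follows because each iteration strictly increases $\Re(r_{k,k})^2$ by at least $\rho^2\epsilon^2$ while that quantity stays bounded by the (sweep-fixed) partial Frobenius mass of column $k$.
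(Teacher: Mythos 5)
Your treatment of the inner loop is sound and is essentially the paper's argument: by Lemma~\ref{lm:givens} and \eqref{eq:betacond1}, every rotation that actually fires (i.e.\ with $g_2=\|r_{ik}\|>\epsilon$) increases $\Re(r_{kk})^2$ by $\Re(\overline{\beta(r_{ik})}\,r_{ik})^2\ge\rho^2\|r_{ik}\|^2>\rho^2\epsilon^2$, while $\Re(r_{kk})^2\le\|\bR\|_F^2=\|\bA\|_F^2$ stays bounded (A2), so the inner loop terminates. The genuine gap is in the outer loop, and it is exactly the obstacle you flag but do not resolve. Your first suggestion (``measure progress on the already-processed part'') is left as a gesture, and the concrete alternative you lean on is false: it is not true that the upper-triangular mass $\sum_{\ell\le j}\|r_{\ell j}\|_2^2$ is non-decreasing across a sweep. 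A rotation for column $k$ acting on rows $k$ and $i>k$ mixes, in every column $j$ with $k\le j<i$, the upper-triangular entry $(k,j)$ with the subdiagonal entry $(i,j)$; the pairwise Euclidean mass is conserved but can flow downward, and even at $j=k$ only the \emph{real part} of $r_{kk}$ is guaranteed to grow (Lemma~\ref{lm:givens}), not $\|r_{kk}\|_2$. So neither $\sum_k\Re(r_{kk})^2$ (as you note, row $i$ hits the later diagonal entry $r_{ii}$) nor the subdiagonal Frobenius mass is a monotone potential, and no ``fixed decrease per sweep'' follows. Also, your parenthetical claim that the inner loop on column $k$ only modifies rows $>k$ of columns $\ge k$ is incorrect: rows $k$ and $i$ are modified in \emph{all} columns, which is precisely why earlier columns can be perturbed and several sweeps are needed.

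The missing idea, which is how the paper closes the argument, is a per-column monotonicity plus strong induction rather than a single global potential. Observe that $\Re(r_{jj})^2$ can be decreased only by rotations whose pivot column satisfies $k<j$ (with $i=j$): shifts do not decrease it by \eqref{eq:betacond2}, rotations with $k=j$ increase it, and operations with $k>j$ do not touch row $j$. Hence $\Re(r_{11})^2$ is monotone non-decreasing under \emph{every} operation of the algorithm; being bounded it converges, and since each firing rotation in column $1$ adds more than $\rho^2\epsilon^2$, only finitely many can fire, after which column $1$ is permanently inactive ($\max_{\ell>1}\|r_{\ell 1}\|\le\epsilon$ whenever it is tested). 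Conditional on columns $1,\dots,j$ being inactive, $\Re(r_{j+1,j+1})^2$ becomes monotone non-decreasing and the same argument settles column $j+1$; strong induction on $j$ then yields $g_1\le\epsilon$ and termination. Without this (or some correct substitute for your monotone-mass claim), your proposal does not establish convergence of the outer loop.
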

\begin{proof}
Note that A2 implies that $\setA$-Givens rotations are isometries, so that $\|\bR\|_F^2=\|\bA\|_F^2$ is constant throughout Algorithm~\ref{code:QR}.

First we fix $k$ and prove by contradiction that the inner loop (lines~\ref{line:innerloopstart}--\ref{line:innerloopend}) converges, i.e.\ $g_2 \leq \epsilon$ eventually.
By Lemma~\ref{lm:givens}, each Givens rotation (line~\ref{line:givens}) increases $\Re(r_{kk})^2$ by $\Re(\overline{\beta(r_{ik})}r_{ik})^2
$. Because the monotonically increasing sequence of $\Re(r_{kk})^2$ is bounded above by $\|\bR\|_F^2$, it must converge by the monotone convergence theorem towards some quantity $s \geq 0$. Hence there is some point after which $|\Re(r_{kk})^2 -s|< \rho^2\epsilon^2$. At the next iteration $\Re(r_{kk})^2$ increases by less than $\rho^2\epsilon^2$, which implies $\rho^2\|r_{ik}\|^2 \leq \Re(\overline{\beta(r_{ik})}r_{ik})^2 < \rho^2\epsilon^2$ using (\ref{eq:betacond1}). Hence $g_2=\|r_{ik}\| < \epsilon$.

Now we will prove by contradiction that the outer loop (lines \ref{line:outerloopstart}--\ref{line:outerloopend}) converges, i.e.\ $g_1 \leq \epsilon$ eventually. 
Consider $\Re(r_{jj})^2$. Line~\ref{line:extramid} cannot decrease $\Re(r_{jj})^2$ by (\ref{eq:betacond2}). The inner loop will not affect $\Re(r_{jj})^2$ when $k>j$ and we have already shown above that the inner loop cannot decrease $\Re(r_{jj})^2$ when $k=j$. Hence $\Re(r_{jj})^2$ may only decrease when $k <j$. Hence $\Re(r_{11})^2$ is monotonically increasing. Hence (using the same argument as for the inner loop) there is some $s_1 \geq 0$ such that there is some point after which  $|\Re(r_{11})^2 -s_1|< \rho^2\epsilon^2$ and from then onwards $\max\limits_{\ell:\ell>1}\|r_{\ell 1}\| <\epsilon$. From then onwards the inner loop will have no effect on $\bR$ when $k=1$.
Now proceed by strong induction on $j$. If $\max\limits_{\ell,j^\prime:\ell> j^\prime,j^\prime \leq j}\|r_{\ell j^\prime}\| <\epsilon$ then the inner loop will not affect $\bR$ when $k \leq j$. Hence $\Re(r_{j+1,j+1})^2$ is monotonically increasing and (using the same argument as for $\Re(r_{11})^2$) we will reach a point after which $\max\limits_{\ell:\ell> j+1}\|r_{\ell,j+1}\| <\epsilon$. Hence by strong induction we will reach a point where $g_1 = \max\limits_{\ell,j^\prime:\ell>j^\prime }\|r_{\ell j^\prime}\| < \epsilon$.
\end{proof}
\begin{remark}\label{rk:betaprime}
If $\beta$ satisfies (\ref{eq:betacond1}) but not (\ref{eq:betacond2}), then
\[
\beta^\prime(a)=\begin{cases}
\beta(a) &\mbox{ if } \left|\Re\left(\overline{\beta(a)}a\right)\right| \geq \left|\Re(a)\right|\\
1 & \mbox{ otherwise}
\end{cases}
\]
satisfies both (\ref{eq:betacond1}) and  (\ref{eq:betacond2}).
\end{remark}
We will without loss of generality define $\beta(0)=1$ from now on and assume that $a \neq 0$ when computing or defining $\beta(a)$.
\begin{lemma}\label{lm:comparedecency}
If $\beta$ is $(\|\bullet\|,\rho)$-decent, then it is also $(\|\bullet\|,\rho^\prime)$-decent $\forall \rho^\prime\leq\rho$.\\
If $\beta$ is $(\|\bullet\|_2,\rho)$-decent, then it is also $(\|\bullet\|_\infty,\rho)$-decent.\\
If $\beta$ is $(\|\bullet\|_\infty,\rho)$-decent, then it is also $(\|\bullet\|_2,\rho d^{-\frac{1}{2}})$-decent.
\end{lemma}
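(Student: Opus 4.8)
The plan is to observe that condition \eqref{eq:betacond2} in Definition~\ref{def:decent} refers neither to the norm $\|\bullet\|$ nor to the constant $\rho$, so it transfers verbatim in all three cases; the entire argument therefore reduces to checking \eqref{eq:betacond1}. Since $\|a\|$ enters \eqref{eq:betacond1} only on the right-hand side multiplied by the positive constant $\rho$, it suffices in each case to bound the new right-hand side below by the old one.

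For the first claim, $\rho' \leq \rho$ and $\|a\| \geq 0$ give $\rho'\|a\| \leq \rho\|a\| \leq \left|\Re\left(\overline{\beta(a)}a\right)\right|$, which is exactly \eqref{eq:betacond1} for $(\|\bullet\|,\rho')$. For the second and third claims I would invoke the elementary comparison of the $\ell_2$ and $\ell_\infty$ norms on $\setR^d$: for every $v \in \setR^d$ one has $\|v\|_\infty \leq \|v\|_2 \leq d^{1/2}\|v\|_\infty$. Applying this to $v = \mathcal{V}(a)$ and recalling that $\|a\|_2 = \|\mathcal{V}(a)\|_2$ and $\|a\|_\infty = \|\mathcal{V}(a)\|_\infty$ by definition, $(\|\bullet\|_2,\rho)$-decency gives $\left|\Re\left(\overline{\beta(a)}a\right)\right| \geq \rho\|a\|_2 \geq \rho\|a\|_\infty$, proving $(\|\bullet\|_\infty,\rho)$-decency; and $(\|\bullet\|_\infty,\rho)$-decency gives $\left|\Re\left(\overline{\beta(a)}a\right)\right| \geq \rho\|a\|_\infty \geq \rho d^{-1/2}\|a\|_2$, proving $(\|\bullet\|_2,\rho d^{-1/2})$-decency.

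There is no real obstacle here: the only thing to be careful about is to state explicitly that \eqref{eq:betacond2} is independent of the norm and of $\rho$, so that it need not be re-verified, and to cite the correct direction of each $\ell_2$–$\ell_\infty$ inequality (the sharp constant $d^{1/2}$ being attained, e.g., by the all-ones vector). The proof is therefore a few lines.
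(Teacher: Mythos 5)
Your proposal is correct and follows essentially the same route as the paper: note that condition (\ref{eq:betacond2}) is independent of the norm and of $\rho$, and then verify (\ref{eq:betacond1}) via $\rho'\|a\|\leq\rho\|a\|$ and the standard comparison $\|a\|_\infty\leq\|a\|_2\leq d^{1/2}\|a\|_\infty$. The paper's proof is a two-line version of exactly this argument, so nothing is missing.
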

\begin{proof}
(\ref{eq:betacond2}) does not depend on $\|\bullet\|$ or $\rho$. In (\ref{eq:betacond1}) we have $\rho\|a\| \geq \rho^\prime \|a\|$ and $\rho\|a\|_2 \geq \rho\|a\|_\infty \geq \rho d^{-\frac{1}{2}} \|a\|_2$.
\end{proof}

The question remains of choosing an appropriate function $\beta$ (and norm $\|\bullet\|$) satisfying the assumptions of Definition~\ref{def:decent}. In general the larger $\left|\Re\left(\overline{\beta(a)}a\right)\right|$ is, the faster the algorithm will converge, so we want $\left|\Re\left(\overline{\beta(a)}a\right)\right|$ (and hence $\rho$) to be large if possible. Hence the obvious choice is 
\begin{equation}\label{eq:betamax}
\beta_{\max}(a)=\argmax\limits_{b \in \mathcal{U}(\setA)} \Re\left(\bar{b}a\right).
\end{equation}
One additional requirement is that we should be able to compute $\beta(a)$ in a reasonable amount of time, so that a simpler choice may sometimes be preferable.

Let $\|\bullet\|=\|\bullet\|_2$. Then $\|a\|_2^2 = \left\|\overline{\beta(a)}a\right\|_2^2 \geq \Re\left(\overline{\beta(a)}a\right)^2$, and the best we can hope for is $\rho=1$, which happens when $\overline{\beta(a)}a =\Re\left(\overline{\beta(a)}a\right) \in \setR$, 
which implies $\left(\overline{\beta(a)}a\right)^{-1}\overline{\beta(a)}=a^{-1}$ (or $a = 0$). Hence $\rho=1$ is only possible when every non-zero $a \in \setA$ has an inverse, i.e.\ when $\setA$ is a division algebra. But Frobenius' theorem states that the only finite-dimensional real division algebras (up to isomorphism) are $\setR$, $\setC$ and $\setH$ \citep{palais_classification_1968}. For $\setA=\setR$ we can choose $\beta(a)=1$. For $\setA=\setC$ or $\setA=\setH$ we can choose $\beta(a)=\beta_{\max}(a)=\frac{a}{\|a\|_2}$,
(assuming $\bar{\bullet}$ is the usual complex or quaternion  conjugation). In those three cases Algorithm~\ref{code:QR} will be a standard real/complex/quaternion QR by columns algorithm, and will converge in a finite number of steps even when $\epsilon=0$, so that $\bR$ is exactly triangular. In addition, the diagonal entries of $\bR$ will be real and positive in these three cases.\label{rk:divisionalgebra}

\begin{example}
Let $\setA=\setR^{d^\prime \times d^\prime}$ ($d={{}d^\prime}^2$), then $\beta_{\max}$ can be computed through $\beta_{\max}(a)=uv^\tp$, where $u,v \in \mathcal{U}(\setA)$ are obtained from the real SVD $a=udv^\tp$ \citep{fan_metric_1955}. Similarly, for $\setA=\setC^{d^\prime \times d^\prime}$ ($d=2{{}d^\prime}^2$) or $\setA=\setH^{d^\prime \times d^\prime}$ ($d=4{{}d^\prime}^2$), we have $\beta_{\max}(a)=uv^\herm$ where $u,v \in \mathcal{U}(\setA)$ are obtained from the complex/quaternion SVD $a=udv^\herm$. In all three cases $\beta_{\max}$ is $(\|\bullet\|_F,{{}d^\prime}^{-1})$-decent (or equivalently $(\|\bullet\|_2,{{}d^\prime}^{-\frac{1}{2}})$-decent).
\end{example}

We will now consider constructions for $\beta$ which are appropriate when $\setA$ is not a division algebra, and when computation of $\beta_{\max}$ is difficult or slow.

\begin{definition}\label{def:betamax}
Let $\mathcal{B}=\{e_1,e_2,\ldots,e_d\}$ be a basis of $\setA$,
 and $a=a_1 e_1 + \ldots+ a_d e_d \in \setA$. Define $J(a)= \argmax\limits_j |a_j|$ and $\beta_\mathcal{B}(a)=e_{J(a)} \in \mathcal{B}$.
\end{definition}
\begin{definition}\label{def:unitary}
The basis  $\mathcal{B}=\{e_1,e_2,\ldots,e_d\}$ of $\setA$ is \emph{unitary} if $\mathcal{B} \subseteq \mathcal{U}(\setA)$ and $\Re(\bar{e_i}e_j)=\delta_{ij} \ \forall i,j$. 
\end{definition}
Unitary bases for the case $\setA= \setC^{d^\prime \times d^\prime}$ (where the field is $\setK=\setC$ and $d^\prime=\sqrt{d}$), are of interest in the quantum mechanics, quantum computing and quantum error correcting code literature, where they are called unitary error bases. Methods based on latin squares and projective representations for constructing such a basis explicitly for arbitrary $d^\prime$ are explained in \citet{klappenecker_unitary_2003}.
Note that if $\{e_1,\ldots,e_{{{}d^\prime}^2}\}$ is such a unitary error basis for $\setC^{d^\prime \times d^\prime}$ with $\setK=\setC$ (i.e.\ viewed as a complex algebra), then $\{e_1,\rmi e_1,\ldots,e_{{{}d^\prime}^2},\rmi e_{{{}d^\prime}^2}\}$ is a unitary basis for $\setC^{d^\prime \times d^\prime}$ with $\setK=\setR$ (i.e.\ viewed as a real algebra with $d=2{{}d^\prime}^2$), and we also have that $\{e_1,\rmi e_1,\rmj e_1,\rmk e_1,\ldots,e_{{{}d^\prime}^2},\rmi e_{{{}d^\prime}^2},\rmj e_{{{}d^\prime}^2},\rmk e_{{{}d^\prime}^2}\}$ is a unitary basis for $\setH^{d^\prime \times d^\prime}$ (with $\setK=\setR$ and $d=4{{}d^\prime}^2$).

One can show that no unitary basis exists for the simple algebra $\setR^{3 \times 3}$ with $\Re=\frac{1}{3}\trace$,%
\footnote{Indeed there do not exist four mutually orthogonal $3 \times 3$ orthogonal matrices, let alone nine.}
and this implies that although all finite-dimensional semi-simple algebras have an invertible basis \citep[Corollary~3.2.7]{lopez-permouth_algebras_2015}, they do not all have a unitary basis. 

The existence of a unitary basis for $\setR^d$ (with element-wise multiplication) is equivalent to the existence of a (real) $d \times d$ Hadamard matrix. It is known that (real) $d \times d$ Hadamard matrices do not exist when $d \geq 3$, $d \neq 0\! \mod 4$. The existence of (real) Hadamard matrices for  all $d = 0\! \mod 4$ is a long-standing open conjecture in coding theory \citep{weisstein_hadamard_2015}.

\begin{lemma}\label{lm:unitary}
If $\mathcal{B}=\{e_1,e_2,\ldots,e_d\}$ is a unitary basis of $\setA$, then $\beta_\mathcal{B}$ in Definition~\ref{def:betamax} is $\left(\|\bullet\|_\infty,1\right)$-decent (and $\left(\|\bullet\|_2,d^{-\frac{1}{2}}\right)$-decent).
\end{lemma}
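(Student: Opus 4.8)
The plan is to verify the two conditions of Definition~\ref{def:decent} directly for $\beta_\mathcal{B}$, using the defining properties of a unitary basis. Fix $a = a_1 e_1 + \ldots + a_d e_d \in \setA$ with $a \neq 0$, and let $J = J(a) = \argmax_j |a_j|$ so that $\beta_\mathcal{B}(a) = e_J$. The central computation is to evaluate $\Re\left(\overline{e_J}\, a\right)$. By linearity of $\Re$ and of left multiplication, $\Re\left(\overline{e_J}\, a\right) = \sum_{j=1}^d a_j \Re\left(\overline{e_J} e_j\right)$. The unitary basis condition $\Re(\bar{e_i} e_j) = \delta_{ij}$ collapses this sum to the single term $a_J$. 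Hence $\left|\Re\left(\overline{\beta_\mathcal{B}(a)}\, a\right)\right| = |a_J| = \max_j |a_j| = \|a\|_\infty$.

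This identity immediately gives both inequalities. For \eqref{eq:betacond1} with $\|\bullet\| = \|\bullet\|_\infty$ we get $\left|\Re\left(\overline{\beta_\mathcal{B}(a)}\, a\right)\right| = \|a\|_\infty$, which is $\geq \rho \|a\|_\infty$ with $\rho = 1$ (indeed with equality). For \eqref{eq:betacond2} we need $|a_J| \geq |\Re(a)|$; under assumption A1 we have $\Re(a) = a_1$, and $|a_1| \leq \max_j |a_j| = |a_J|$ is immediate from the definition of $J$. (If the basis is not assumed to satisfy A1, then $e_1 = 1 \in \mathcal{B}$ still holds for the unitary-basis constructions of interest, or one notes $\Re(a)$ is a coefficient in the $\|\bullet\|_\infty$-relevant sense; in any case the inequality $|\Re(a)| \le \|a\|_\infty = |a_J|$ is what is required.) Therefore $\beta_\mathcal{B}$ is $(\|\bullet\|_\infty, 1)$-decent.

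The parenthetical claim that $\beta_\mathcal{B}$ is also $(\|\bullet\|_2, d^{-1/2})$-decent is then not a separate argument at all: it follows from the third part of Lemma~\ref{lm:comparedecency}, since $(\|\bullet\|_\infty, 1)$-decency implies $(\|\bullet\|_2, 1 \cdot d^{-1/2})$-decency. I do not anticipate a genuine obstacle here — the proof is essentially the one-line observation that $\Re(\overline{e_J} a) = a_J$ picks out exactly the largest coefficient. The only point requiring a little care is condition \eqref{eq:betacond2} when the working basis does not satisfy A1; but since $\Re$ is in that case defined via $\frac{1}{d}\trace(\tilde{\bullet})$ and the relevant unitary bases (unitary error bases, Hadamard-type bases) do contain $1$, this is a non-issue in all cases of interest, and I would simply invoke $|\Re(a)| \le \|a\|_\infty$ which holds whenever $1$ is among the basis elements.
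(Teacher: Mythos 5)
Your proof is correct and follows essentially the same route as the paper's (one-line) argument: the unitary-basis orthogonality condition $\Re(\bar{e_i}e_j)=\delta_{ij}$ gives $\Re\left(\overline{e_{J(a)}}\,a\right)=a_{J(a)}$, hence $\left|\Re\left(\overline{\beta_\mathcal{B}(a)}a\right)\right|=\|a\|_\infty\geq|\Re(a)|$ and $\|a\|_\infty\geq d^{-\frac{1}{2}}\|a\|_2$. The only cosmetic difference is that you obtain the $\left(\|\bullet\|_2,d^{-\frac{1}{2}}\right)$-decency via Lemma~\ref{lm:comparedecency} while the paper writes the inequality $\|a\|_\infty^2\geq\frac{1}{d}\|a\|_2^2$ directly, and your caveat about bases not satisfying A1 is unnecessary since A1 is in force throughout that section.
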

\begin{proof} 
\[
\Re\left(\overline{\beta(a)} a\right)^2=a_{J(a)}^2=\|a\|_{\infty}^2 \geq \frac{1}{d} \|a\|_2^2\ ,\ \ \mbox{ and }\ \ \|a\|_{\infty}^2 \geq \Re(a)^2.
\]
\end{proof}

\begin{definition}
The real twisted group algebra $\setR^\alpha[G]$ obtained from a group $G$ with twisting function $\alpha: G \times G \rightarrow \{-1,1\}$ is the real algebra $\setR[\mathcal{B}]$ generated by the basis $\mathcal{B}=\{b_g:g \in G\}$ with multiplication $b_g b_h=\alpha(g,h) b_{gh}$. To preserve associativity we require that $\alpha$ satisfies
\begin{equation}\label{eq:associativity}
\alpha(f,g)\alpha(fg,h)=\alpha(f,gh)\alpha(g,h)\ \forall f,g,h \in G.
\end{equation}
If $\alpha(g,h)=1 \ \forall g,h \in G$ then $\setR^\alpha[G]$ is a group algebra denoted $\setR[G]$.
\end{definition}
The twisted group algebras $\setR^\alpha[G]$ and $\setR^{-\alpha}[G]$ are isomorphic, hence we will from now on assume without loss of generality that $\alpha(1,1)=1$.


\begin{proposition}\label{prop:twistedalgebra}
Let $G$ be a (finite) group and let $\setA$ be the twisted group algebra $\setR^\alpha[ G]=\setR[ \mathcal{B}]$, with basis $\mathcal{B}=\left\{b_g : g \in G\right\}$.   Let $\bar{\bullet}$ be the unique involution such that $\bar{b}=b^{-1}$ for all $b \in \mathcal{B}$.
Then A1, A2, A3 are satisfied, and $\mathcal{B}$ is a unitary basis.
\end{proposition}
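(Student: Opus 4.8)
The plan is to route everything through the real matrix representation $\tilde{\setA}$: for a twisted group algebra $\tilde{\setA}$ consists of \emph{signed permutation matrices}, and once this is observed, A3 holds by inspection, and A1, A2 and the unitarity of $\mathcal{B}$ follow with almost no further work. I would begin with some $\pm1$ book-keeping. Substituting $(f,g,h)=(1,1,h)$ and $(g,1,1)$ into the associativity constraint (\ref{eq:associativity}) and using $\alpha(1,1)=1$ together with $\alpha$ being $\{-1,1\}$-valued gives $\alpha(1,h)=\alpha(g,1)=1$ for all $g,h\in G$; hence $b_1x=xb_1=x$, so $b_1$ is the multiplicative identity, and taking $e_1=b_1=1$ settles A1. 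Substituting $(f,g,h)=(g,g^{-1},h)$ into (\ref{eq:associativity}) and cancelling the factor $\alpha(1,h)=1$ yields the single key identity
\[
\alpha(g,g^{-1})\,\alpha(g^{-1},h)=\alpha(g,g^{-1}h)\qquad\forall\,g,h\in G ,
\]
whose specialisation $h=g$ also gives $\alpha(g,g^{-1})=\alpha(g^{-1},g)$; hence $b_g^{-1}=\alpha(g,g^{-1})b_{g^{-1}}$ is a genuine two-sided inverse of $b_g$, lying in $\pm\mathcal{B}$. Since any involution is $\setR$-linear, the only candidate for $\bar{\bullet}$ is the linear extension of $b_g\mapsto b_g^{-1}$.

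Next I would compute the RMR and deduce A3 (hence A2). From $b_gb_h=\alpha(g,h)b_{gh}$ the matrix $\tilde{b_g}$ has exactly one nonzero entry in each row and column, namely $(\tilde{b_g})_{gh,h}=\alpha(g,h)$; likewise $\widetilde{b_g^{-1}}$ has nonzero entries $(\widetilde{b_g^{-1}})_{g^{-1}h,h}=\alpha(g,g^{-1})\alpha(g^{-1},h)$, whereas $(\tilde{b_g}^{\tp})_{g^{-1}h,h}=\alpha(g,g^{-1}h)$. The key identity above says precisely $\tilde{b_g}^{\tp}=\widetilde{b_g^{-1}}$, so $\tilde{\setA}$ is closed under transpose. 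Because transpose is the standard involution of $\setR^{d\times d}$ and $\tilde{\bullet}$ is an algebra isomorphism onto $\tilde{\setA}$, pulling transpose back through $\tilde{\bullet}$ produces a bona fide involution on $\setA$ sending $b_g\mapsto b_g^{-1}$; this is exactly $\bar{\bullet}$, and the resulting setup is A3. Then A2 follows by the sufficiency remark after A3 (unitary matrices over $\setR^{d\times d}$ are orthogonal, hence isometric).

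It remains to check that $\mathcal{B}$ is a unitary basis in the sense of Definition~\ref{def:unitary}. Here $\bar{b_g}b_g=b_g^{-1}b_g=1$ shows $\mathcal{B}\subseteq\mathcal{U}(\setA)$, and $\bar{b_g}b_h=b_g^{-1}b_h=\pm b_{g^{-1}h}$, whose real part — the $b_1$-coefficient, by A1 — vanishes unless $g=h$, in which case $\bar{b_g}b_g=1=b_1$ forces that coefficient to be $+1$. Hence $\Re(\bar{b_g}b_h)=\delta_{gh}$, as required.

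The only real content is the $\pm1$ arithmetic, compressed into the identity $\alpha(g,g^{-1})\alpha(g^{-1},h)=\alpha(g,g^{-1}h)$ derived from (\ref{eq:associativity}); I expect that to be the one place demanding care, after which A1, A2, A3 and unitarity of $\mathcal{B}$ are each immediate. A direct verification of $\overline{xy}=\bar{y}\bar{x}$ on basis elements is also feasible but needs two nested applications of (\ref{eq:associativity}) and is messier, so obtaining $\bar{\bullet}$ as a pulled-back transpose is the cleaner route.
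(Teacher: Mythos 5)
Your proof is correct and follows essentially the same route as the paper: A1 from specialising the cocycle condition (\ref{eq:associativity}) at the identity, A3 (hence A2) from the observation that the RMR of the basis consists of signed permutation matrices so that $\tilde{b_g}^\tp=(\tilde{b_g})^{-1}=\alpha(g,g^{-1})\widetilde{b_{g^{-1}}}\in\tilde{\setA}$, and unitarity of $\mathcal{B}$ from $\bar{b_g}b_h=\pm b_{g^{-1}h}$ having zero real part unless $g=h$. Your explicit entrywise derivation of the identity $\alpha(g,g^{-1})\alpha(g^{-1},h)=\alpha(g,g^{-1}h)$ just spells out what the paper compresses into the orthogonality of signed permutation matrices.
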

\begin{proof}
Setting $g=1$ in (\ref{eq:associativity}) implies $\alpha(f,1)=\alpha(1,h)=\alpha(1,1) \ \forall f,h\in G$, which using the assumption $\alpha(1,1)=1$ implies $b_1=1$ and A1 is satisfied.

The matrices $\tilde{b} \in \tilde{\mathcal{B}}$ are of the form $\tilde{b}=\bD\bP$ where $\bD$ is diagonal with $\pm 1$ on its diagonal entries, and $\bP$ is a permutation matrix, hence $\tilde{\mathcal{B}}\subseteq \mathcal{U}(\setR^{d \times d})$ so that $\tilde{\bar{b}}=\widetilde{b^{-1}}=\tilde{b}^{-1}=\tilde{b}^\tp$. In particular $\tilde{\setA}$ is closed under $\bullet^\tp$ since $\tilde{b_g}^\tp=(\tilde{b_g})^{-1}=\alpha(g,g^{-1})\widetilde{b_{g^-1}} \in \tilde{\setA}$. Hence A3 (and A2) is satisfied.

The above is also a direct consequence of \citet[Theorems~4.11, 5.2, 5.3 and 5.5]{bales_properly_2006}.

\item $\bar{b_g}b_h=b_g^{-1} b_h=\alpha(g,g^{-1})b_{g^{-1}}b_h=\alpha(g,g^{-1})\alpha(g^{-1},h)b_{g^{-1}h}$. Hence either the real part is $0$, or  $g^{-1}h=1$. In the latter case $h=g$ and we have $\Re(\bar{b_g}b_g)=\Re(1)=1$.
\end{proof}


%

\section{The $\setA$SVD by $\setA$QRD algorithm}\label{sec:SVD}

In this section we show that any convergent $\setA$QRD algorithm also yields a convergent $\setA$SVD algorithm. This is essentially the same as the approach taken in \citet[Table~II]{foster_algorithm_2010} for the special case where $\setA=\setC[z,z^{-1}]$ is the infinite-dimensional algebra of Laurent polynomials with complex coefficients.

\begin{algorithm}[htbp]
\caption{$\setA$SVD by $\setA$QRD}\label{code:SVD}
\begin{algorithmic}[1]
\State \textbf{Input:} The matrix to be decomposed $\bA \in \setA^{m \times n}$
\State \textbf{Specify:} A function $\beta:\setA \rightarrow \mathcal{U}(\setA)$,\\ \hspace{4.5em}a norm $\|\bullet\|$,\\ \hspace{4.5em}and the error tolerance $\epsilon>0$.
\State\textbf{Initialise:} $\bU \gets \bI_m$, $\bV \gets \bI_n$, $\bD \gets \bA$. 
\State $g \gets \max\limits_{i,j: i\neq j} \|d_{ij}\|$
\While{$g>\epsilon$} 
\State Compute $\bR$ and $\bQ$ from the $\setA$QRD of $\bD$ using Algorithm~\ref{code:QR}
\State $\bD \gets \bR$
\State $\bU \gets \bU \bQ$
\State Compute $\bR$ and $\bQ$ from the $\setA$QRD of $\bD^\herm$ using Algorithm~\ref{code:QR}
\State $\bD \gets \bR^\herm$
\State $\bV \gets \bV \bQ$
\State $g \gets \max\limits_{i,j: i\neq j} \|d_{ij}\|$
\EndWhile 
\State \textbf{Output:} $\bU$, $\bD$ and $\bV$
\end{algorithmic}
\end{algorithm}

The output from Algorithm~\ref{code:SVD} satisfies $\bA=\bU\bD\bV^\herm$, and $\bU$, $\bV$ are both unitary. However, $\bD$ may be only approximately diagonal, in the sense that every off-diagonal entry has norm at most $\epsilon$. In most applications this will be acceptable as long as a sufficiently small $\epsilon$ is chosen.

\begin{theorem}\label{thm:convergence2}
If $\beta$ is $\|\bullet\|$-decent
then Algorithm~\ref{code:SVD} converges.
\end{theorem}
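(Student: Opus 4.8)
The plan is to exploit the fact that Algorithm~\ref{code:SVD} is built entirely out of calls to Algorithm~\ref{code:QR}, together with the key monotone quantity controlled by decency. Since $\beta$ is $\|\bullet\|$-decent, Theorem~\ref{thm:convergence} tells us each inner $\setA$QRD call terminates. The outer loop of Algorithm~\ref{code:SVD} alternates a QRD of $\bD$ (producing $\bD \gets \bR$ upper-triangular) with a QRD of $\bD^\herm$ (producing $\bD \gets \bR^\herm$ lower-triangular, i.e.\ the transpose step makes it upper-triangular below then flips it back). The standard observation for this kind of "QR sweep" SVD is that the sum of squared real parts of the diagonal entries, $\sum_k \Re(d_{kk})^2$, is monotonically non-decreasing across the whole algorithm, and bounded above by $\|\bD\|_F^2 = \|\bA\|_F^2$ (constant, since every $\setA$-Givens and shift matrix is an isometry under A2). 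So I would first record that invariant.

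Next I would argue that each half-sweep strictly increases $\sum_k \Re(d_{kk})^2$ by a controlled amount unless $\bD$ is already $\epsilon$-close to diagonal. Concretely: running Algorithm~\ref{code:QR} on $\bD$ leaves $\bR$ with every sub-diagonal entry of norm $\le\epsilon$; the transpose-and-QRD step then annihilates (up to $\epsilon$) what were the super-diagonal entries. The point is that whenever some off-diagonal entry $d_{ij}$ has $\|d_{ij}\| > \epsilon$, the subsequent QRD call must perform at least one genuine Givens rotation on that column, and by Lemma~\ref{lm:givens} together with \eqref{eq:betacond1} this rotation increases the relevant $\Re(d_{kk})^2$ by at least $\rho^2\|d_{ij}\|^2 > \rho^2\epsilon^2$ — more carefully, by at least $\Re(\overline{\beta(d_{ij})}d_{ij})^2 \ge \rho^2\|d_{ij}\|^2$. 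Also, by \eqref{eq:betacond2} and the argument already used inside the proof of Theorem~\ref{thm:convergence}, the shift steps and the other rotations never decrease any $\Re(d_{kk})^2$, so the net effect of a half-sweep that does real work is a strict increase of at least $\rho^2\epsilon^2$ in $\sum_k \Re(d_{kk})^2$.

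Combining the two: $\sum_k \Re(d_{kk})^2$ increases by at least $\rho^2\epsilon^2$ on every outer iteration in which $g>\epsilon$, yet it is bounded above by $\|\bA\|_F^2$; hence only finitely many such iterations can occur, so eventually $g \le \epsilon$ and the algorithm halts. The main obstacle I anticipate is the bookkeeping in the middle paragraph: one must be careful that the transpose step does not undo the progress made — i.e.\ that rotating in the "$\bD^\herm$ picture" to kill the old super-diagonal really does push squared-real-diagonal mass up rather than shuffling it between rows and columns — and that the quantity being tracked ($\sum_k\Re(d_{kk})^2$) is invariant under the Hermitian transpose $\bD \leftrightarrow \bD^\herm$ (which it is, since $\Re(\bar{x})=\Re(x)$ and transposition fixes the diagonal). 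Once that monotonicity-with-a-gap is nailed down, the convergence conclusion is immediate by the same boundedness argument as in Theorem~\ref{thm:convergence}.
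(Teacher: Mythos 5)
Your argument rests on the claim that $\sum_k \Re(d_{kk})^2$ is monotonically non-decreasing throughout the algorithm, i.e.\ that ``the shift steps and the other rotations never decrease any $\Re(d_{kk})^2$''. That claim is false, and it is precisely the difficulty that the paper's proofs are organised around: a Givens rotation $\bG(-\theta,b,i,k)$ with pivot $k$ increases $\Re(r_{kk})^2$ (Lemma~\ref{lm:givens}), but it also mixes row $i$, replacing $r_{ii}$ by $-\sin(\theta)\,b\,r_{ki}+\cos(\theta)\,r_{ii}$, which can wipe out $\Re(r_{ii})^2$; the proof of Theorem~\ref{thm:convergence} states this explicitly (``$\Re(r_{jj})^2$ may only decrease when $k<j$''). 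Already for $\setA=\setR$ with $\beta\equiv 1$, applying Algorithm~\ref{code:QR} to $\bD=\begin{pmatrix}0&0\\1&M\end{pmatrix}$ gives $\bR=\begin{pmatrix}1&M\\0&0\end{pmatrix}$, so your candidate Lyapunov function drops from $M^2$ to $1$ in a single half-sweep; matrices of this shape (for $\bD$ or $\bD^\herm$) can occur between sweeps as well. Consequently the bookkeeping ``gain of at least $\rho^2\epsilon^2$ per outer iteration, bounded above by $\|\bA\|_F^2$'' does not go through, and the finite iteration bound it would imply is not available (nor is one claimed in the paper). Your observation that the diagonal real parts are invariant under $\bD\leftrightarrow\bD^\herm$ is correct, but that is not where the problem lies.

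What does survive, and what the paper uses, is per-entry monotonicity established inductively rather than a global sum. $\Re(d_{11})^2$ is monotone throughout both QR steps, because every shift or rotation with pivot $1$ cannot decrease it (Lemma~\ref{lm:givens} together with (\ref{eq:betacond1})--(\ref{eq:betacond2})), while every operation with pivot $k\geq 2$ leaves row $1$ and column $1$ untouched. Being bounded by $\|\bA\|_F^2$, it converges, and once its increments fall below $\rho^2\epsilon^2$ one gets $\max_{i>1}\max(\|d_{i1}\|,\|d_{1i}\|)<\epsilon$, after which the first row and column are fixed; only then does $\Re(d_{22})^2$ become monotone, and the proof proceeds by strong induction on $j$. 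To rescue a sum-based argument you would need to show that the losses suffered by later diagonal entries are dominated by the pivot's gain, which is exactly what the example above refutes; the inductive freezing of the leading rows and columns is the paper's way around this obstruction.
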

\begin{proof}
Theorem~\ref{thm:convergence} states that each QRD step converges. It remains to show that eventually $g\leq \epsilon$.
The proof will proceed by contradiction similarly to the proof of Theorem~\ref{thm:convergence}. 
$\Re(d_{11})^2$ is increasing and bounded above by $\|\bA\|_F^2$. Hence there is some point after which $|\Re(d_{11})^2 -s_1|< \rho^2\epsilon^2$, so that none of the Givens rotations in either QR step may then increase $\Re(d_{11})^2$ by more than $\rho^2\epsilon^2$, so $\max\limits_{i>1}\max(\|d_{i1}\|,\|d_{1i}\|) < \epsilon$.
From this point onwards the first row and column will remain fixed. If $\max\limits_{i,k:k \leq j, k < i}\max(\|d_{ik}\|,\|d_{ki}\|) < \epsilon$ then the first $j$ rows and columns are fixed and $\Re(d_{j+1,j+1})^2$ is monotonically increasing. Using the same argument on $\Re(d_{j+1,j+1})^2$ as on $\Re(d_{11})^2$ there is some point after which $\max\limits_{i>j+1}\max(\|d_{i,j+1}\|,\|d_{j+1,i}\|) < \epsilon$. Proceeding by strong induction on $j$ we eventually have $g=\max\limits_{i,k: k < i}\max(\|d_{ik}\|,\|d_{ki}\|) < \epsilon$.
\end{proof}

\citet{albuquerque_clifford_2002} show that every Clifford algebra $C\ell(p,q)$ can be constructed as a twisted group algebra with group $G=\left(\frac{\setZ}{2\setZ}\right)^{p+q}$ (\citet[Section~8]{bales_properly_2006} shows this for the case $p=0$). Hence we may apply our results to Clifford algebras.
The reduced quaternions and quad-quaternions are also examples of twisted group algebras.

\begin{corollary}
Let $\setA$ be the $2^{p+q}$-dimensional real Clifford algebra $C\ell(p,q)$ with standard basis of blades $\mathcal{B}$. Then A1, A2, A3 are satisfied, $\mathcal{B}$ is a unitary basis, $\beta_\mathcal{B}(a)=e_{\argmax\limits_j|a_j|}$ is $\left(\|\bullet\|_\infty,1\right)$-decent, and Algorithms~\ref{code:QR}\,\&\,\ref{code:SVD} converge taking $\|\bullet\|=\|\bullet\|_\infty$, $\beta=\beta_\mathcal{B}$.
\end{corollary}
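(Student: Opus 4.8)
The plan is to reduce the whole statement to the twisted group algebra case already handled in Proposition~\ref{prop:twistedalgebra}. The input is the fact, cited from \citet{albuquerque_clifford_2002} (and from \citet[Section~8]{bales_properly_2006} in the case $p=0$), that $C\ell(p,q)$ is isomorphic as a real $*$-algebra to the twisted group algebra $\setR^\alpha[G]$ with $G=\left(\setZ/2\setZ\right)^{p+q}$, under an isomorphism carrying the standard basis of blades $\mathcal{B}$ onto the group basis $\{b_g:g\in G\}$ and the relevant Clifford involution onto the involution $\bar{\bullet}$ of Proposition~\ref{prop:twistedalgebra} (the unique one with $\bar{b}=b^{-1}$ on $\mathcal{B}$). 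Once that identification is in place, Proposition~\ref{prop:twistedalgebra} gives A1, A2, A3 and that $\mathcal{B}$ is a unitary basis; Lemma~\ref{lm:unitary} then gives that $\beta_\mathcal{B}$ is $\left(\|\bullet\|_\infty,1\right)$-decent; and Theorems~\ref{thm:convergence} and~\ref{thm:convergence2} give convergence of Algorithms~\ref{code:QR} and~\ref{code:SVD} with $\|\bullet\|=\|\bullet\|_\infty$ and $\beta=\beta_\mathcal{B}$. Since $d=2^{p+q}<\infty$, none of the infinite-dimensional caveats from Section~\ref{sec:preliminaries} apply.

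Concretely I would: order the blade basis with the scalar $1$ listed first, so that the normalisation $\alpha(1,1)=1$ and the conclusion $e_1=1$ of Proposition~\ref{prop:twistedalgebra} are literally available; identify a blade $e_A=e_{i_1}\cdots e_{i_k}$ (with $i_1<\cdots<i_k$) with $b_g$, where $g\in G$ is the indicator of $A\subseteq\{1,\dots,p+q\}$, and read the twisting $\alpha$ off the Clifford relations $e_ie_j=-e_je_i$ and $e_i^2=\pm1$, checking the cocycle identity (\ref{eq:associativity}); observe that every $g\in G$ satisfies $g^{-1}=g$, so $b_g^{-1}=\alpha(g,g)b_g=\pm b_g$ and the involution acts on each blade by a sign, which one identifies with the appropriate standard Clifford anti-automorphism (e.g.\ reversion when every generator squares to $+1$), confirming $\overline{e_A}\,e_A=1$; and finally invoke Proposition~\ref{prop:twistedalgebra}, then Lemma~\ref{lm:unitary}, then Theorems~\ref{thm:convergence} and~\ref{thm:convergence2} in turn. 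The remaining bookkeeping — that $\beta_\mathcal{B}(a)=e_{\argmax_j|a_j|}$ lands in $\mathcal{U}(\setA)$ — is immediate from $\mathcal{B}\subseteq\mathcal{U}(\setA)$.

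The main obstacle is the identification in the first sentence: verifying that the classical construction of $C\ell(p,q)$ really does coincide, basis and involution included, with a twisted group algebra over $\left(\setZ/2\setZ\right)^{p+q}$, i.e.\ that the sign function $\alpha$ extracted from the Clifford relations satisfies (\ref{eq:associativity}) and that the blade-reversing map sending each $b\in\mathcal{B}$ to $b^{-1}$ is a genuine involution. This is precisely the content of \citet{albuquerque_clifford_2002}, so in the write-up it should be quoted rather than reproved; everything downstream of it is a routine application of results already established in the paper.
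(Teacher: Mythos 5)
Your proposal matches the paper's proof, which is exactly the same one-line reduction: cite \citet{albuquerque_clifford_2002} for the identification of $C\ell(p,q)$ with a twisted group algebra over $\left(\setZ/2\setZ\right)^{p+q}$, then apply Proposition~\ref{prop:twistedalgebra}, Lemma~\ref{lm:unitary} and Theorems~\ref{thm:convergence}\,\&\,\ref{thm:convergence2} in sequence. The extra detail you sketch about extracting the twisting $\alpha$ from the Clifford relations and noting that the involution $\bar{b}=b^{-1}$ acts on blades by signs is sound but, as you say yourself, is the content of the cited reference and need not be reproved.
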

\begin{proof}
This follows immediately from Proposition~\ref{prop:twistedalgebra}, Lemma~\ref{lm:unitary}, Theorems~\ref{thm:convergence}\,\&\,\ref{thm:convergence2} and \citep{albuquerque_clifford_2002}.
\end{proof}

 It is important at this point to bear in mind that Proposition~\ref{prop:twistedalgebra} (or equivalently assumption A3) \emph{defines} the involution on $\setA$. 
 Different choices of involution would imply different notions of orthogonality and a different definition of unitary matrix. For the Clifford algebra $C\ell(p,q)$ the involution imposed by Proposition~\ref{prop:twistedalgebra} will be the ``Hermitian conjugation'' of 
 \citet{marchuk_unitary_2008}, which is in general different from the standard ``Clifford involution''. 
The reduced quaternion SVD of \citet{gai_denoising_2015,gai_reduced_2014} uses the involution obtained through A3 although this is not stated explicitly.
One may very well wish to specify the involution, so this forced choice may seem restrictive. It is however not possible in general to compute an SVD with an arbitrary choice of involution. For example, \citet{verstraete_lorentz_2002} use the indefinite Minkowski inner product to define orthogonality, and note that the (real) SVD can no longer diagonalise all matrices when orthogonal matrices are replaced with finite Lorentz transformations.\footnote{This conclusion would remain even without restricting themselves to proper orthochronous Lorentz transformations.} 
Even in the simple case $\setA=\setC$ with the identity as its involution instead of complex conjugation, we have $\mathcal{U}(\setC)=\{-1,1\}$ and no decent $\beta$ exists.

\begin{table}
\centering
\begin{tabular}{|c|c|c|l|}
\hline
Algebra & $\|a\|$ & $\beta(a)$ & Notes\\
\hline\hline
$\setR$ & $|a|$ & 1 & $\beta=\beta_\mathcal{B}=(\pm)\beta_{\max}$\\
$\setC$ & $\|a\|_2$ & $\frac{a}{\|a\|_2}$ & $\beta=\beta_{\max}$\\
$\setH$ & $\|a\|_2$ & $\frac{a}{\|a\|_2}$ & $\beta=\beta_{\max}$\\
$\setR[z,z^{-1}]$ & $\|a\|_\infty$ & $z^{J(a)}$ & $\beta=\beta_\mathcal{B}=(\pm)\beta_{\max}$, $a=\sum_{j\in \setZ} a_j z^j$\\
 $\setC[z,z^{-1}]$ & $\max\limits_{j \in \setZ} \|a_j\|_2$ & $\frac{a_{J(a)}}{\|a_{J(a)}\|_2}z^{J(a)}$ & $\beta=\beta_{\max}$, $a=\sum_{j\in \setZ} a_j z^j$\\
 $\setH[z,z^{-1}]$ & $\max\limits_{j \in \setZ} \|a_j\|_2$ & $\frac{a_{J(a)}}{\|a_{J(a)}\|_2}z^{J(a)}$ & $\beta=\beta_{\max}$, $a=\sum_{j\in \setZ} a_j z^j$\\
$C\ell(p,q)$ & $\|a\|_\infty$ & $e_{J(a)}$ &  $\beta=\beta_\mathcal{B}$, $\bar{e_j}=e_j^{-1}$\\
\hline
\end{tabular}
\caption{
Some standard and recommended choices of norm $\|\bullet\|$ and function $\beta$ to use in Algorithms~\ref{code:QR}\,\&\,\ref{code:SVD}. All of the $\beta$ given are $1$-decent. Here $J(a)=\argmax_j \|a_j\|_2$. Note that $\setR$, $\setC$ and $\setH$ are the only algebras for which one can set $\epsilon=0$ in Algorithm~\ref{code:QR}.
}
\end{table}

\section{Using the classification of real semi-simple algebras}\label{sec:semisimple}
When A3 holds we may assume without loss of generality that $\setA$ is an algebra of real matrices. It is then helpful to think of abstract tensor products $\otimes$ and direct sums $\oplus$ in terms of explicit Kronecker products and Kronecker sums.
\begin{definition}
Let $\setA$ be a $d$-dimensional real vector space with basis $\mathcal{B}=\{e_1,\ldots,e_d\}$ and let $\setA^\prime$ be a $d^\prime$-dimensional real vector space with basis $\mathcal{B}^\prime=\{e_1^\prime,\ldots,e_{d^\prime}^\prime\}$. Then
\begin{itemize}
\item $\setA \oplus \setA^\prime$ is a $(d+d^\prime)$-dimensional real vector space with basis 
$\mathcal{B}^\oplus=\{e_1 \oplus 0,\ldots, e_d \oplus 0, 0 \oplus e_1^\prime,\ldots,0 \oplus e_{d^\prime}^\prime \}$.%
\footnote{The particular ordering used here assumes that $d<\infty$.}
\item $\setA \otimes \setA^\prime$ is a $(d\cdot d^\prime)$-dimensional vector space with basis
$\mathcal{B}^\otimes=\{e_1\otimes e_1^\prime,\ldots,e_1\otimes e_{d^\prime}^\prime,\ldots,e_d \otimes e_1^\prime , \ldots, e_d \otimes e_{d^\prime}^\prime\}$.%
\footnote{The particular ordering used here assumes that $d^\prime<\infty$.}
\end{itemize}
If both $\setA$ and $\setA^\prime$ are real algebras, then furthermore
\begin{itemize}
\item $\setA \oplus \setA^\prime$ is a real algebra with multiplication $(x \oplus x^\prime)(y \oplus y^\prime)=(x x^\prime) \oplus (y y^\prime)$.
\item  $\setA \otimes \setA^\prime$ is a real algebra with multiplication satisfying $(x \otimes x^\prime)(y \otimes y^\prime)=(x x^\prime) \otimes (y y^\prime)$.
\end{itemize}
If both $\setA$ and $\setA^\prime$ are $*$-algebras then furthermore
\begin{itemize}
\item $\setA \oplus \setA^\prime$ is a $*$-algebra with involution $\overline{a \oplus a^\prime}=\bar{a} \oplus \bar{a^\prime}$.
\item $\setA \otimes \setA^\prime$ is a $*$-algebra with involution satisfying $\overline{a \otimes a^\prime}=\bar{a} \otimes \bar{a^\prime}$.
\end{itemize}
\end{definition}
\begin{proposition}
Let $\setA$ be a $d$-dimensional real algebra with unitary basis $\mathcal{B}=\{e_1,\ldots,e_d\}$ and let $\setA^\prime$ be a $d^\prime$-dimensional real algebra with unitary basis $\mathcal{B}^\prime=\{e_1^\prime,\ldots,e_{d^\prime}^\prime\}$.
Then $\mathcal{B}^\otimes$ is a unitary basis for $\setA \otimes \setA^\prime$.

If furthermore both $\setA$ and $\setA^\prime$ satisfy A1 (resp. A3) then  $\setA \otimes \setA^\prime$ satisfies A1 (resp. A3).

Also, if $d=d^\prime$ then $\mathcal{B}^\pm=\{e_1\oplus e_1^\prime,e_1 \oplus -e_1^\prime,\ldots,e_d \oplus e_d^\prime ,e_d \oplus -e_d^\prime\}$ is a unitary basis for $\setA \oplus \setA^\prime$.
If furthermore both $\setA$ and $\setA^\prime$ satisfy A1 (resp. A3) then (using the basis $\mathcal{B}^\pm$)  $\setA \oplus \setA^\prime$ satisfies A1 (resp. A3).

\end{proposition}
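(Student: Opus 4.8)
The plan is to verify the claimed properties of the proposed bases by reducing everything to the unitary-basis condition of Definition~\ref{def:unitary} and the defining property A3 (that $\tilde{\setA}$ be closed under transposition and that the involution be the one induced by the RMR), using the multiplicativity of Kronecker products and blocks. I would treat the tensor product first, then the direct sum.

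\emph{Tensor product.} First I would recall that the RMR of $\setA \otimes \setA^\prime$, with respect to the ordered basis $\mathcal{B}^\otimes$, is (by the very definition of the multiplication $(x \otimes x^\prime)(y \otimes y^\prime) = (xy)\otimes(x^\prime y^\prime)$ and the ordering chosen) given by the Kronecker product $\widetilde{a \otimes a^\prime} = \tilde a \otimes \tilde{a^\prime}$. Then unitarity of $\mathcal{B}^\otimes$ follows from two observations: each $e_i \otimes e_j^\prime$ is a unit with inverse $e_i^{-1} \otimes (e_j^\prime)^{-1}$ (so $\mathcal{B}^\otimes \subseteq \mathcal{U}$), and $\Re\bigl(\overline{e_i \otimes e_j^\prime}\,(e_k \otimes e_\ell^\prime)\bigr)$ can be computed from the trace formula $\Re(x) = \tfrac1{dd^\prime}\trace(\tilde x)$, since $\trace(\tilde x \otimes \tilde y) = \trace(\tilde x)\trace(\tilde y)$; this gives $\Re\bigl(\overline{e_i \otimes e_j^\prime}\,(e_k\otimes e_\ell^\prime)\bigr) = \Re(\bar e_i e_k)\Re(\bar{e_j^\prime}e_\ell^\prime) = \delta_{ik}\delta_{j\ell}$, which is exactly orthonormality of $\mathcal{B}^\otimes$. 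For A1: if $e_1 = 1$ and $e_1^\prime = 1$ then $e_1 \otimes e_1^\prime$ is the multiplicative identity of $\setA \otimes \setA^\prime$ and is listed first, so A1 holds. For A3: $\tilde{\setA} \otimes \tilde{\setA^\prime}$ is closed under transposition because $(\tilde a \otimes \tilde{a^\prime})^\tp = \tilde a^\tp \otimes \tilde{a^\prime}^\tp$ and each factor stays in the respective RMR; and the induced involution satisfies $\widetilde{\overline{a\otimes a^\prime}} = (\tilde a \otimes \tilde{a^\prime})^\tp = \tilde{\bar a}\otimes \tilde{\bar{a^\prime}} = \widetilde{\bar a \otimes \bar{a^\prime}}$, matching the involution $\overline{a\otimes a^\prime} = \bar a \otimes \bar{a^\prime}$ declared in the preceding definition.

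\emph{Direct sum (case $d = d^\prime$).} Here the RMR with respect to $\mathcal{B}^\oplus$ is the block-diagonal (Kronecker-sum) embedding $\widetilde{a \oplus a^\prime} = \tilde a \oplus \tilde{a^\prime}$, and $\Re(a\oplus a^\prime) = \tfrac1{2d}\trace(\tilde a \oplus \tilde{a^\prime}) = \tfrac12\bigl(\Re(a)+\Re(a^\prime)\bigr)$ when $\setA,\setA^\prime$ have dimension $d$ each. The elements $e_i \oplus \pm e_i^\prime$ are units (inverse $e_i^{-1}\oplus \pm(e_i^\prime)^{-1}$, using $\mathcal{B},\mathcal{B}^\prime$ unitary), so $\mathcal{B}^\pm \subseteq \mathcal{U}(\setA\oplus\setA^\prime)$. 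For the orthonormality I would compute, for signs $s,t \in \{+1,-1\}$, $\Re\bigl(\overline{e_i\oplus s e_i^\prime}\,(e_j \oplus t e_j^\prime)\bigr) = \tfrac12\bigl(\Re(\bar e_i e_j) + st\,\Re(\bar{e_i^\prime}e_j^\prime)\bigr) = \tfrac12(\delta_{ij} + st\,\delta_{ij})$, which is $\delta_{ij}$ when $s = t$ and $0$ when $s \neq t$; hence $\mathcal{B}^\pm$ is orthonormal, so unitary. A1: if $e_1 = 1$, $e_1^\prime = 1$, then $e_1 \oplus e_1^\prime$ is the identity of $\setA\oplus\setA^\prime$ and appears first, so A1 holds. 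A3: $\tilde{\setA}\oplus\tilde{\setA^\prime}$ is closed under transposition since $(\tilde a \oplus \tilde{a^\prime})^\tp = \tilde a^\tp \oplus \tilde{a^\prime}^\tp$, and the induced involution is $\overline{a\oplus a^\prime} = \bar a \oplus \bar{a^\prime}$, matching the declared involution.

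I expect the only genuinely delicate point to be the bookkeeping that justifies the identifications $\widetilde{a\otimes a^\prime} = \tilde a \otimes \tilde{a^\prime}$ and $\widetilde{a\oplus a^\prime} = \tilde a \oplus \tilde{a^\prime}$ — i.e.\ confirming that the specific orderings of $\mathcal{B}^\otimes$ and $\mathcal{B}^\oplus$ fixed in the preceding definition are exactly the ones that make the abstract $\otimes,\oplus$ agree with the concrete Kronecker product and block-diagonal sum, and likewise that $\Re$ is correctly normalised by the ``$\tfrac1{\dim}\trace$'' convention so that the trace identities apply. The equality $d = d^\prime$ is used precisely to keep $\Re = \tfrac1{2d}\trace$ consistent across the summands; without it the factor-$\tfrac12$ splitting would be weighted and $\mathcal{B}^\pm$ would fail to be orthonormal. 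Once those conventions are pinned down, every remaining step is the short trace/transpose computation sketched above.
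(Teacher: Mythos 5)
Your proof is correct and takes essentially the same route as the paper's, whose proof consists of exactly the observations you spell out — that $\mathcal{U}(\setA)\otimes\mathcal{U}(\setA^\prime)\subseteq\mathcal{U}(\setA\otimes\setA^\prime)$, $\mathcal{U}(\setA\oplus\setA^\prime)=\mathcal{U}(\setA)\oplus\mathcal{U}(\setA^\prime)$, $\Re(a\otimes a^\prime)=\Re(a)\Re(a^\prime)$ and $\Re(a\oplus a^\prime)=\tfrac{1}{2}\left(\Re(a)+\Re(a^\prime)\right)$ when $d=d^\prime$ — followed by a routine check of the definitions, which you carry out in detail. The only cosmetic slip is writing $\mathcal{B}^\oplus$ where you mean $\mathcal{B}^\pm$ in the direct-sum part; since the two bases differ by a scalar multiple of an orthogonal change of basis, this affects none of your trace or transpose computations.
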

\begin{proof}

The proofs involve checking of the definitions, which is straightforward once one notes that $\mathcal{U}(\setA \oplus \setA^\prime)=\mathcal{U}(\setA) \oplus \mathcal{U}(\setA^\prime)$, $\mathcal{U}(\setA \otimes \setA^\prime)\supseteq \mathcal{U}(\setA) \otimes \mathcal{U}(\setA^\prime)$, $\Re(a \oplus a^\prime)=\frac{1}{2}(\Re(a)+\Re(a^\prime))$ (when $d=d^\prime$) and $\Re(a \otimes a^\prime)=\Re(a)\Re(a^\prime)$.
\end{proof}

Consider the case $\setA = \setR^{k \times k}$, $\bar{\bullet}=\bullet^\tp$. We can identify $\bA \in \setA^{m \times n}=\setR^{m \times n} \otimes \setA$ with an $m \times n$ block-matrix with $k \times k$ blocks $\bM \in \setR^{mk \times nk}$ (the $i^\prime,j^\prime$-entry of the $i,j$-entry of $\bA$ is the $ki+i^\prime,kj+j^\prime$-entry of $\bM$). Similarly for $\setA^{m \times m}$, $\setA^{n \times n}$. Every upper triangular (resp. diagonal) matrix in $\setR^{mk \times mk}$ is also upper triangular (resp. diagonal) in $\setA^{m \times n}$.
 The unitary matrices in $\setA^{m \times m}$ (resp. $\setA^{n \times n}$) are orthogonal matrices in $\setR^{mk \times mk}$ (resp. $\setR^{nk \times nk}$) and vice-versa.
 A $\setR$QRD (resp. $\setR$SVD) of $\bM \in \setR^{mk \times nk}$ is thus immediately also an $\setA$QRD (resp. $\setA$SVD) of $\bA \in \setA^{m \times n}$. This block-matrix based approach remains valid if we replace $\setR$ with $\setC$ or $\setH$ and $\bullet^\tp$ with $\bullet^\herm$. In other words, the QRD (resp. SVD) of a (block-)matrix is also a valid block-QRD (resp. block-SVD).

\citet{gai_reduced_2014} uses the fact that the reduced quaternion algebra is isomorphic to $\setC \oplus \setC$ to compute the reduced quaternion SVD through two parallel $\setC$SVDs. We will now generalise this technique to all semi-simple algebras.

Let $\setA=\bigoplus_{\ell=1}^s\setA_\ell$.  We identify $\setA_k$ with the subalgebra $\left(\bigoplus_{\ell=1}^{k-1} \{0\}\right) \oplus \setA_k \oplus \left(\bigoplus_{\ell=k+1}^{s} \{0\}\right)$ of $\setA$. Let $1_k= \left(\bigoplus_{\ell=1}^{k-1} 0\right) \oplus 1 \oplus \left(\bigoplus_{\ell=k+1}^{s} 0\right) \in \setA$ denote the identity of $\setA_k$.  Since
$1_k^2=1_k=\bar{1_k}$ 
 and 
$1_k 1_\ell =0  \;\forall\, k \neq \ell$,
 multiplication by $1_k$ is an orthogonal projection into the subalgebra $\setA_k$. Since $ \setA \ni 1= \sum_{\ell=1}^s 1_\ell$, every element $a \in \setA$ can be written as $\sum_{\ell=1}^s a 1_\ell$, where $a 1_k\in \setA_k$.

Let $\bA \in \setA^{m \times n}$ and $\bA_k = \bA 1_k$. We may treat $\bA_k$ as belonging to $\setA_k^{m \times n}$ and compute its  $\setA_k$QRD $\bA_k = \bQ_k \bR_k$. These $s$ $\setA_k$QRDs can be added to form the $\setA$QRD $\bA= \left(\sum_{\ell=1}^s \bQ_\ell 1_\ell \right)\left(\sum_{\ell=1}^s \bR_\ell 1_\ell \right)$. Similarly, the $s$ $\setA_k$SVDs  $\bA_k= \bU_k \bD_k \bV_k^\herm$  can be added to form the $\setA$SVD \linebreak$\bA= \left(\sum_{\ell=1}^s \bU_\ell 1_\ell \right) \left(\sum_{\ell=1}^s \bD_\ell 1_\ell \right) \left(\sum_{\ell=1}^s \bV_\ell 1_\ell \right)^\herm$.

Because of Frobenius' theorem \citep{palais_classification_1968}, in the case of real algebras the Artin-Wedderburn theorem \citep{grillet_abstract_2007} can be stated as
\begin{proposition}\label{cor:artinwedderburn}
Every finite-dimensional real semi-simple algebra is isomorphic to a direct sum
\begin{equation*}
\setA_1^{n_1 \times n_1} \oplus \cdots \oplus \setA_s^{n_s \times n_s}
\end{equation*}
of finitely many matrix algebras where $\setA_1,\ldots., \setA_s \in \lbrace \setR, \setC, \setH \rbrace$.
\end{proposition}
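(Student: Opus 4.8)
The plan is to invoke the classical Artin--Wedderburn theorem in full generality and then pin down the division rings that occur by means of Frobenius' theorem. First I would recall the general form of Artin--Wedderburn \citep{grillet_abstract_2007}: a finite-dimensional algebra over a field is semi-simple if and only if it is isomorphic to a direct sum $\bigoplus_{i=1}^s D_i^{n_i \times n_i}$ of finitely many matrix algebras over division rings $D_i$. Applying this to $\setA$ (which is associative and unital by A0, and finite-dimensional by hypothesis) yields such a decomposition; comparing real dimensions gives $n_i^2 \dim_\setR D_i \leq \dim_\setR \setA < \infty$, so each $D_i$ is a finite-dimensional real division algebra, and it is associative because $\setA$ is.

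The remaining step is to apply Frobenius' theorem \citep{palais_classification_1968}: up to isomorphism, the only finite-dimensional associative real division algebras are $\setR$, $\setC$ and $\setH$. Hence $D_i \in \{\setR,\setC,\setH\}$ for each $i$, and setting $\setA_i := D_i$ puts the decomposition in the stated form $\setA_1^{n_1 \times n_1} \oplus \cdots \oplus \setA_s^{n_s \times n_s}$.

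I expect no real obstacle here: the substance is entirely contained in the two cited theorems, and the only thing that genuinely needs to be said is the dimension count showing the division-ring factors are finite-dimensional over $\setR$. Were one to unfold Artin--Wedderburn rather than cite it, the work would move into its proof — splitting the semi-simple algebra into its simple two-sided ideals and identifying each simple factor, via Schur's lemma, with a matrix algebra over the (division) endomorphism ring of a simple module — but this is classical and not needed for the present paper.
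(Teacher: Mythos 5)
Your proposal is correct and follows exactly the route the paper itself takes: it presents the proposition as the general Artin--Wedderburn theorem \citep{grillet_abstract_2007} specialised via Frobenius' theorem \citep{palais_classification_1968} to the real case, which is precisely your argument. The only addition on your side is the (welcome but routine) dimension count confirming that the division-ring factors are finite-dimensional over $\setR$.
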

\begin{corollary}\label{cor:semisimple}
If $\setA$ is a semi-simple algebra, then after choosing the representation of $\setA$ given by Proposition~\ref{cor:artinwedderburn}, with corresponding quaternion matrix involution $\bullet^\herm$,  an $\setA$QRD (resp. $\setA$SVD) of $\bA \in \setA^{m \times n}$ can be obtained by  computing $s$ independent $\setA_k$QRDs (resp. $\setA_k$SVDs) of matrices $\bA_k \in \setA_k^{n_k m \times n_k n}$, where $\setA_1,\ldots., \setA_s \in \lbrace \setR, \setC, \setH \rbrace$.
\end{corollary}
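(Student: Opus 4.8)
The plan is to obtain the decomposition by composing the two reductions already set up in this section — the direct-sum/orthogonal-idempotent splitting, and the block-matrix identification of a matrix over $\setA_k^{n_k \times n_k}$ with a matrix over $\setA_k$ — together with the fact that real, complex and quaternion QRDs and SVDs exist and can be computed. So the argument is essentially an assembly of ingredients, and no new estimate is needed.

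First I would apply Proposition~\ref{cor:artinwedderburn} to fix an algebra isomorphism $\setA \cong \setA_1^{n_1 \times n_1} \oplus \cdots \oplus \setA_s^{n_s \times n_s}$ with each $\setA_\ell \in \{\setR,\setC,\setH\}$, and transport to $\setA$ the involution that acts on the right-hand side as the componentwise Hermitian transpose; this is the ``corresponding quaternion matrix involution'' $\bullet^\herm$ of the statement, and with it $\setA$ is a $*$-algebra, so $\setA$QRD and $\setA$SVD are well defined. Writing $1 = \sum_{\ell=1}^s 1_\ell$ as a sum of orthogonal idempotents with $1_k^2 = 1_k = \bar{1_k}$ and $1_k 1_\ell = 0$ for $k \neq \ell$ as above, I would set $\bA_k = \bA 1_k$ and regard it as an element of $(\setA_k^{n_k \times n_k})^{m \times n}$. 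An $\setA_k^{n_k \times n_k}$QRD $\bA_k = \bQ_k \bR_k$ for every $k$ then reassembles into the $\setA$QRD $\bA = (\sum_{\ell=1}^s \bQ_\ell 1_\ell)(\sum_{\ell=1}^s \bR_\ell 1_\ell)$, and likewise the $\setA_k^{n_k \times n_k}$SVDs reassemble into $\bA = (\sum_\ell \bU_\ell 1_\ell)(\sum_\ell \bD_\ell 1_\ell)(\sum_\ell \bV_\ell 1_\ell)^\herm$; unitarity of the outer factors and upper-triangularity (resp.\ diagonality) of the middle one follow at once because the $1_\ell$ are orthogonal idempotents summing to $1$.

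Next, for each $k$ I would invoke the block-matrix identification described above: an element $\bA_k \in (\setA_k^{n_k \times n_k})^{m \times n}$ corresponds to a matrix in $\setA_k^{n_k m \times n_k n}$; the unitary matrices over $\setA_k^{n_k \times n_k}$ are precisely the orthogonal/unitary matrices over $\setA_k$; and an upper-triangular (resp.\ diagonal) matrix over $\setA_k$ is block-upper-triangular (resp.\ block-diagonal), hence upper-triangular (resp.\ diagonal) when read as an $m \times n$ matrix over $\setA_k^{n_k \times n_k}$. Consequently any $\setA_k$QRD (resp.\ $\setA_k$SVD) of $\bA_k \in \setA_k^{n_k m \times n_k n}$ is simultaneously an $\setA_k^{n_k \times n_k}$QRD (resp.\ SVD) of $\bA_k \in (\setA_k^{n_k \times n_k})^{m \times n}$. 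Since $\setA_k \in \{\setR,\setC,\setH\}$, these decompositions exist and are computable, for instance by classical real, complex or quaternion QRD/SVD algorithms, or by Algorithms~\ref{code:QR} and~\ref{code:SVD} with $\beta = \beta_{\max}$, which is $1$-decent. Composing the two reductions then yields the $\setA$QRD (resp.\ $\setA$SVD) of $\bA$ from $s$ independent $\setA_k$QRDs (resp.\ $\setA_k$SVDs) of the matrices $\bA_k \in \setA_k^{n_k m \times n_k n}$, which is the claim.

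The one point that calls for care rather than bookkeeping is that the involution on $\setA$ is not free but is forced to be the componentwise Hermitian transpose of the Artin--Wedderburn representation; one therefore has to check that this choice places $\setA$ within the framework of Sections~\ref{sec:givens} and~\ref{sec:SVD}. It does: a direct sum of matrix algebras over $\setR$, $\setC$ or $\setH$, equipped with the componentwise $\bullet^\herm$, is a $*$-algebra whose real matrix representation is closed under transpose, so that A3 (hence A2) holds, using $\Re = \frac{1}{d}\trace$ in place of A1 when no basis of the required form is available. Given that, the remainder of the proof is just a matter of tracking how unitarity, triangularity and diagonality behave under the two identifications, both of which were already justified above, so I do not anticipate a genuine obstacle.
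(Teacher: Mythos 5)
Your proposal is correct and follows essentially the same route as the paper, which presents this corollary as a direct consequence of the two reductions developed just before it: the orthogonal-idempotent splitting $\bA_k=\bA 1_k$ with reassembly via $\sum_\ell \bQ_\ell 1_\ell$, etc., and the block-matrix identification of $(\setA_k^{n_k\times n_k})^{m\times n}$ with $\setA_k^{n_k m\times n_k n}$ under which block-QRDs/SVDs are QRDs/SVDs. Your added remark that the transported componentwise $\bullet^\herm$ involution satisfies A3 (hence A2) is a sensible explicit check of a point the paper leaves implicit, but it does not change the argument.
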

Thus real, complex and quaternion QRD (resp. SVD) algorithms are in principle sufficient to compute $\setA$QRDs (resp. $\setA$SVDs) for any semi-simple algebra $\setA$. One caveat is that to use this result in practice one must be able to compute the isomorphism between $\setA$ and $\bigoplus_{\ell=1}^s \setA_\ell^{n_\ell \times n_\ell}$ explicitly. Because real algebras are also real vector spaces, the isomorphism will be an invertible linear change of basis.

Every Clifford algebra $\mathcal{C}\ell(p,q)$ is either a simple algebra isomorphic to $\setR^{n \times n}$, $\setC^{n \times n}$, or $\setH^{n \times n}$  or a semi-simple algebra isomorphic to $\setR^{n \times n} \oplus \setR^{n \times n}$
, or $\setH^{n \times n} \oplus \setH^{n \times n}$. Explicit constructions of these isomorphisms are available in \citep{tian_universal_1998}.

The use of Corollary~\ref{cor:semisimple} to compute the QRD has advantages over the direct use of Algorithm~\ref{code:QR}. It is inherrently parallelised, and as mentioned in page~\pageref{rk:divisionalgebra}, we may set $\epsilon=0$ when computing QRDs in $\setR$,$\setC$ or $\setH$. We would also expect it to be more computationally efficient, epecially for small $\epsilon$. This expectation is confirmed in Section~\ref{sec:CGA} Figure~\ref{fig:rotations}.

\section{Examples}\label{sec:examples}

%

\subsection{Multivariate Laurent polynomial algebra}
Let $z_1,\ldots,z_\kappa$ be $\kappa$ abstract commuting variables . The infinite-dimensional algebra of $\kappa$-variate Laurent polynomials with real coefficients is \linebreak$\setA=\setR[z_1,z_1^{-1},\ldots,z_\kappa,z_\kappa^{-1}]$. It is a commutative algebra and its standard basis is the set of monomials $\mathcal{B}=\{\prod_{k=1}^\kappa z_k^{\rho_k}:\rho_1,\ldots,\rho_\kappa \in \setZ\}$. $\mathcal{B}$ is a multiplicative group which is isomorphic to the additive group $\setZ^\kappa$, so $\setA$ is a group algebra. Each element of $\setA$ is by definition a \emph{finite} linear combination of monomials.

If $\kappa=1$ then $\setA$ is the usual Laurent polynomials, and can be used to represent time-series and convolutive filters acting on them, as in \citet{mcwhirter_evd_2007}. Setting $\kappa=2$ $\setA$ allows the same type of analysis to be performed on images and convolutive filters acting on them (e.g. blurring and 2D shifting), $\kappa=3$ can be used for 3D images or videos, and so forth.

By Proposition~\ref{prop:twistedalgebra}, Lemma~\ref{lm:unitary} and Theorem~\ref{thm:convergence} (resp. Theorem~\ref{thm:convergence2})  the QRD (resp. SVD) of a multivariate Laurent polynomial matrix can be computed using Algorithm~\ref{code:QR} (resp. Algorithm~\ref{code:SVD}) with $\overline{\prod_{k=1}^\kappa z_k^{\rho_k}}=\prod_{k=1}^\kappa z_k^{-\rho_k}$, $\|\bullet\|=\|\bullet\|_\infty$ and $\beta=\beta_\mathcal{B}$ (see Definition~\ref{def:betamax}).

Because $\setA$ is infinite-dimensional, the approach of Section~\ref{sec:semisimple} cannot be applied directly. However, the group $\setZ$ can be approximated for large $\delta$ by the finite cyclic group $\mathcal{C}_\delta=\frac{\setZ}{\delta\setZ}$, and by extension the algebra $\setA \cong \setR[\setZ^\kappa]$ can be approximated by $\setA^\prime=\setR[\mathcal{C}_\delta^\kappa]$. 
More precisely, any finite sequence of calculations performed in $\setR[\setZ^\kappa]$ and in $\setR[\mathcal{C}_\delta^\kappa]$ will produce identical results if $\delta$ is sufficiently large. If $\delta$ is larger than twice the largest (positive or negative) power of $z_k$ in a signal, then viewing that signal as belonging to $\setR[\mathcal{C}_\delta^\kappa]$ instead of $\setR[\setZ^\kappa]$ is conceptually the same as using the periodic edge extension convention on the signal of size $\delta \times \cdots \times \delta$ rather than the zero-padding edge extension convention.

We can now use the approach of Section~\ref{sec:semisimple} to compute the QRD or SVD of matrices in $\setR[\mathcal{C}_\delta^\kappa]$ by noting that for even $\delta$ $\setR[\mathcal{C}_\delta^\kappa]$ is ($*$-)algebra-isomorphic to $\setC^{\left(\frac{\delta}{2}\right)^\kappa}=\bigoplus_{\ell=1}^{\left(\frac{\delta}{2}\right)^\kappa} \setC$, with the isomorphism given explicitly by the positive frequencies of the $\kappa$-dimensional discrete Fourier transform. This ``approximate isomorphism'' between $\setR[z_1,z_1^{-1},\ldots,z_\kappa,z_\kappa^{-1}]$ and $\setC^{\left(\frac{\delta}{2}\right)^\kappa}$ generalises to higher dimensions the fact that decompositions of Laurent polynomial matrices are at least approximately equivalent to parallel frequency-by-frequency decompositions of complex matrices.

\subsection{Quad-quaternion algebra}
Let $\setA$ be the quad-quaternion algebra $\setH \otimes \setH$.
\citet{gong_quad-quaternion_2008} reduces the problem of computing the eigenvalue decomposition of a Hermitian covariance matrix in $\setA^{n \times n}$ to the EVD of a Hermitian $2n \times 2n$ biquaternion matrix, and \citet{le_bihan_music_2007} reduces the problem of computing the EVD of a Hermitian $m \times m$ biquaternion matrix to the EVD of a Hermitian $2m \times 2m$ quaternion matrix, so that ultimately the EVD of a Hermitian $4n \times 4n$ quaternion matrix is required.

Using instead the approach described in Section~\ref{sec:semisimple}, we note that the algebra $\setH \otimes \setH$ is isomorphic to $\setR^{4 \times 4}$. An explicit isomorphism is given by
\begin{align*}
&1 \otimes 1 \mapsto \bI_4,\ \ 
\rmi \otimes 1 \mapsto \begin{pmatrix}
0 & -1 & 0 & 0\\
1 & 0 & 0 & 0\\
0 & 0 & 0 & -1\\
0 & 0 & 1 & 0
\end{pmatrix},\ \ 
\rmj \otimes 1 \mapsto \begin{pmatrix}
0 & 0 & -1 & 0\\
0 & 0 & 0 & 1\\
1 & 0 & 0 & 0\\
0 & -1 & 0 & 0
\end{pmatrix},\\
&1 \otimes \rmi \mapsto \begin{pmatrix}
0 & -1 & 0 & 0\\
1 & 0 & 0 & 0\\
0 & 0 & 0 & 1\\
0 & 0 & -1 & 0
\end{pmatrix},\ \ 
1 \otimes \rmj \mapsto \begin{pmatrix}
0 & 0 & -1 & 0\\
0 & 0 & 0 & -1\\
1 & 0 & 0 & 0\\
0 & 1 & 0 & 0
\end{pmatrix},
\end{align*}
where the remaining 11 basis elements can be obtained from products of these 5. This isomorphism can be obtained by identifying the quad-quaternion $a \otimes b$ with the linear transformation $q \mapsto aqb,\ q \in\setH$, and it is a $*$-algebra isomorphism. Because $\setH \otimes \setH$ is a twisted group algebra, this gives us a unitary basis for $\setR^{4 \times 4}$, and in particular the basis is orthogonal. The $\frac{1}{2}\|\bullet\|_F$ norm on the $\setR^{4 \times 4}$ representation is equal to the $\|\bullet\|_2$ norm on $\setH \otimes \setH$.

The EVD of a Hermitian matrix is equal to its SVD. The isomorphism above allows us to compute the $\setA$SVD of an $n \times n$ quad-quaternion matrix from the SVD of a $4 n \times 4 n$ real matrix. This is a more direct, simpler, and more computationally  efficient approach compared to using the EVD/SVD of a $4 n \times 4 n$ quaternion matrix.

Similarly, the algebra of biquaternions $\setH \otimes \setC$ is isomorphic to $\setC^{2 \times 2}$, and computing the SVD of a $2n \times 2n$ complex matrix is a more direct and efficient way of obtaining a biquaternion SVD than computing the SVD of a $2n \times 2n$ quaternion matrix as in \citet{le_bihan_music_2007}.


\subsection{Conformal geometric algebra $C\ell(4,1)$}\label{sec:CGA}
Consider the 32-dimensional conformal geometric algebra $\setA=C\ell(4,1)$ with standard ordered basis \[\begin{split}\mathcal{B}=
\{&e_1,\ldots,e_{32}\}\\
=\{&\gamma_0,\\
&\gamma_1,\gamma_2,\gamma_3,\gamma_+,\gamma_-,\\
&\gamma_1\gamma_2,\gamma_1\gamma_3,\gamma_1\gamma_+,\gamma_1\gamma_-,\gamma_2\gamma_3,\gamma_2\gamma_+,\gamma_2\gamma_-,\gamma_3\gamma_+,\gamma_3\gamma_-,\gamma_+\gamma_-,\\
&\gamma_1\gamma_2\gamma_3,\gamma_1\gamma_2\gamma_+,\gamma_1\gamma_2\gamma_-,\gamma_1\gamma_3\gamma_+,\gamma_1\gamma_3\gamma_-,\gamma_1\gamma_+\gamma_-,\\
&\gamma_2\gamma_3\gamma_+,\gamma_2\gamma_3\gamma_-,\gamma_2\gamma_+\gamma_-,\gamma_3\gamma_+\gamma_-,\\
&\gamma_1\gamma_2\gamma_3\gamma_+,\gamma_1\gamma_2\gamma_3\gamma_-,\gamma_1\gamma_2\gamma_+\gamma_-,\gamma_1\gamma_3\gamma_+\gamma_-,\gamma_2\gamma_3\gamma_+\gamma_-,\\
&\gamma_1\gamma_2\gamma_3\gamma_+\gamma_-\},\end{split}\]
where $\gamma_0=1$, $\gamma_1^2=\gamma_2^2=\gamma_3^2=\gamma_+^2=1$, $\gamma_-^2=-1$ and the grade-1 basis elements $\{\gamma_1,\gamma_2,\gamma_3,\gamma_+,\gamma_-\}$ anti-commute.

\citet[Theorem~2.5.2]{tian_universal_1998} describes an isomorphism between $C\ell(4,1)$ and $\setC^{4 \times 4}$. After correcting some typos in \citet[Equation~2.4.4]{tian_universal_1998}, the isomorphism is given by
\begin{align*}
& \gamma_0 \mapsto \bI_4,\ \ 
\gamma_1 \mapsto \begin{pmatrix}
1 & 0 & 0 & 0\\
0 & -1 & 0 & 0\\
0 & 0 & 1 & 0\\
0 & 0 & 0 & -1
\end{pmatrix},\ \ 
\gamma_2 \mapsto \begin{pmatrix}
0 & 1 & 0 & 0\\
1 & 0 & 0 & 0\\
0 & 0 & 0 & 1\\
0 & 0 & 1 & 0
\end{pmatrix},\\
&\gamma_3 \mapsto \begin{pmatrix}
0 & 0 & 0 & 1\\
0 & 0 & -1 & 0\\
0 & -1 & 0 & 0\\
1 & 0 & 0 & 0
\end{pmatrix},
\gamma_+ \mapsto \begin{pmatrix}
0 & 0 & 0 & -\rmi\\
0 & 0 & \rmi & 0\\
0 & -\rmi & 0 & 0\\
\rmi & 0 & 0 & 0
\end{pmatrix},
\gamma_- \mapsto \begin{pmatrix}
0 & -1 & 0 & 0\\
1 & 0 & 0 & 0\\
0 & 0 & 0 & 1\\
0 & 0 & -1 & 0
\end{pmatrix},
\end{align*}
where the remaining 28 basis elements can be obtained from products of these 6. This isomorphism is a $*$-algebra isomorphism (using the involution $\bar{e_i}=e_i^{-1}$).

Figure~\ref{fig:QR} shows a matrix $\bA \in C\ell(4,1)^{3 \times 2}$ whose coefficients are random standard Gaussian, along with its $C\ell(4,1)$QRD, which was computed both directly in $C\ell(4,1)$ using Algorithm~\ref{code:QR} with $\bar{e_i}=e_i^{-1}$, $\|\bullet\|=\|\bullet\|_\infty$, $\beta=\beta_\mathcal{B}$ and $\epsilon=10^{-16}$; and through the $\setC$QRD of the $\setC^{4 \times 4}$ representation using Algorithm~\ref{code:QR} with complex conjugation, $\|\bullet\|=\|\bullet\|_2$, $\beta=\beta_{\max}$ and $\epsilon=10^{-16}$ .
The first approach required 1658 Givens rotations and 2 sweeps. The second approach required 80 Givens rotations and 1 sweep. Note that we would normally expect the QR decomposition of a matrix in $\setC^{12 \times 8}$ to require 60 Givens rotations, since there are 60 entries below the diagonal, and this would be the case with standard $\setC$QR algorithms. However, our ``naive'' implementation of the $\setC$ Givens rotations may fail to set a coefficient exactly to 0 because of rounding error. Let $\|\bullet\|_2$ be the norm on $C\ell(4,1)$. After setting all entries of $\bR$ below the diagonal to 0, the reconstruction error $\|\bA-\bQ \bR\|_F$ is $3.39 \cdot 10^{-14}$ and $1.21 \cdot 10^{-14}$ for the first and second approach respectively.

Figure~\ref{fig:SVD} shows the $C\ell(4,1)$SVD of $\bA$. Again,  this was computed both directly in $C\ell(4,1)$ using Algorithm~\ref{code:SVD} with $\bar{e_i}=e_i^{-1}$, $\|\bullet\|=\|\bullet\|_\infty$, $\beta=\beta_\mathcal{B}$ and $\epsilon=10^{-16}$; and through the $\setC$SVD of the $\setC^{4 \times 4}$ representation using Algorithm~\ref{code:SVD} with complex conjugation, $\|\bullet\|=\|\bullet\|_2$, $\beta=\beta_{\max}$ and $\epsilon=10^{-16}$ .  The first approach required 209 QRDs and a total of 42935 Givens rotations. The second approach required 387 QRDs and a total of 2770 Givens rotations. Let $\|\bullet\|_2$ be the norm on $C\ell(4,1)$. After setting all off-diagonal entries of $\bD$ to 0, the reconstruction error $\|\bA-\bU \bD \bV^\herm\|_F$ is $8.51 \cdot 10^{-13}$ and $5.71 \cdot 10^{-14}$ for the first and second approach respectively.

When comparing the two approaches one should bear in mind that a $C\ell(4,1)$ Givens rotations (with $b=1$) requires 16 times more real additions and multiplications than a $\setC$ Givens rotation (with $b=1$), and also that the error tolerance $\epsilon$ used in each approach refers to a different norm, so that one should set the error tolerance to $\frac{\epsilon}{16}$ instead of $\epsilon$ in the second approach to ensure that $\max\limits_{i,j:i<j} \|r_{ij}\|_\infty\leq\epsilon$. Figure~\ref{fig:rotations} compensates for these two facts when comparing the computational complexity of the two approaches for varying $\epsilon$. It shows that the second approach is much more computationally efficient. 

\begin{figure}[htbp]
\centering
\includegraphics[width=0.43\textwidth]{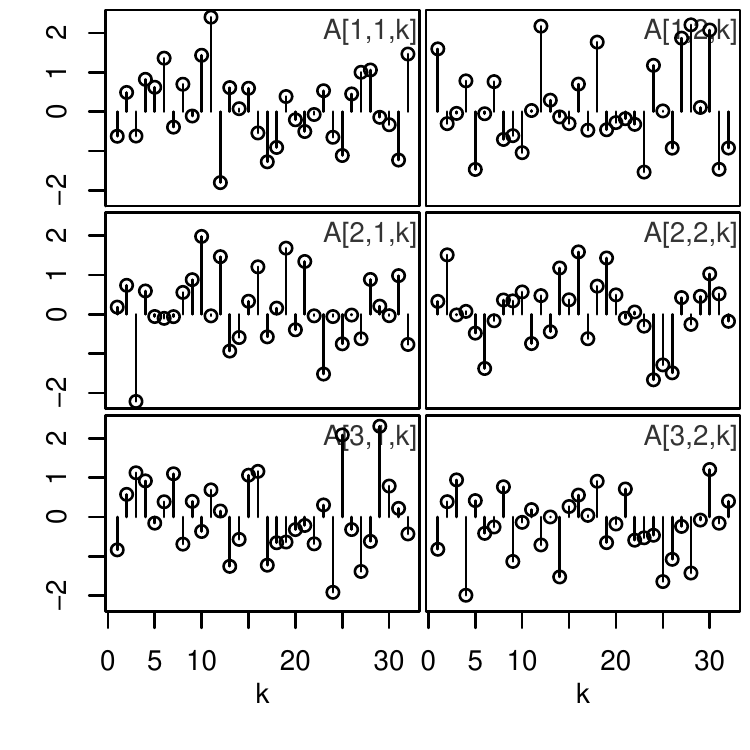}\\
\includegraphics[width=0.43\textwidth]{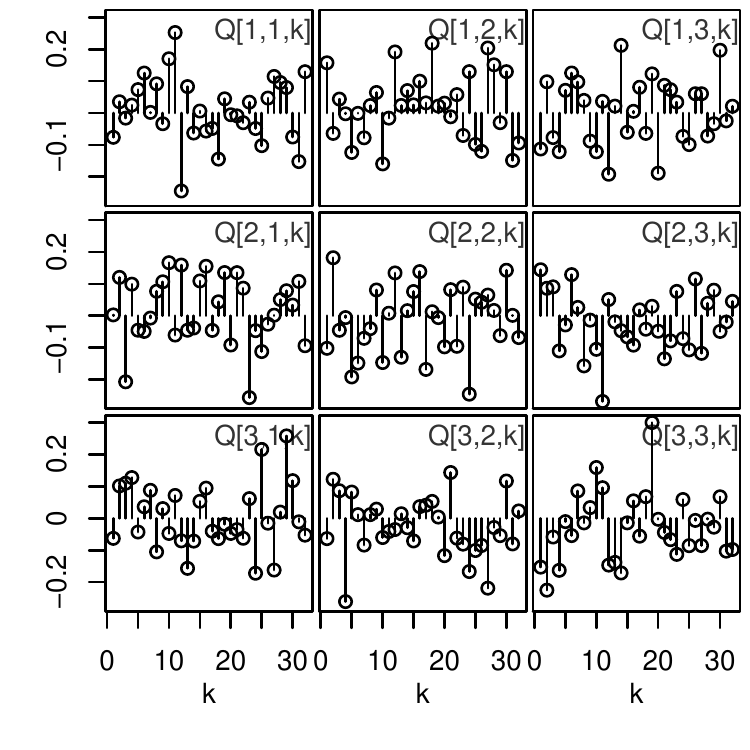}
\includegraphics[width=0.43\textwidth]{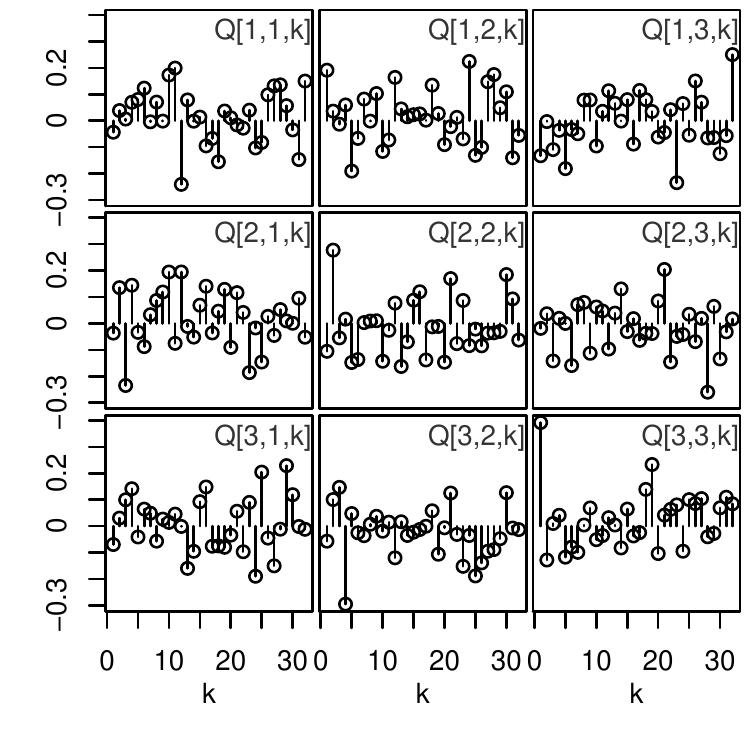}\\
\includegraphics[width=0.43\textwidth]{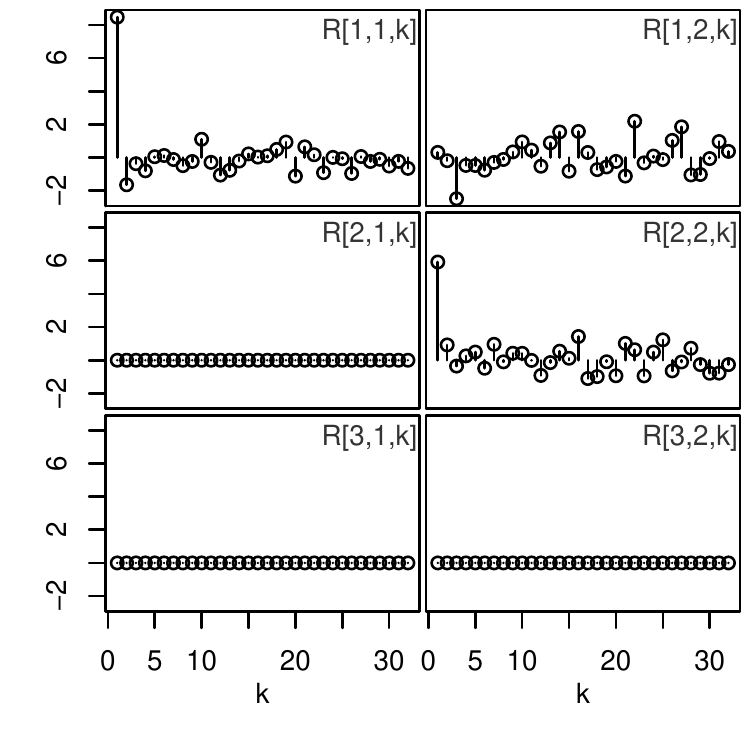}
\includegraphics[width=0.43\textwidth]{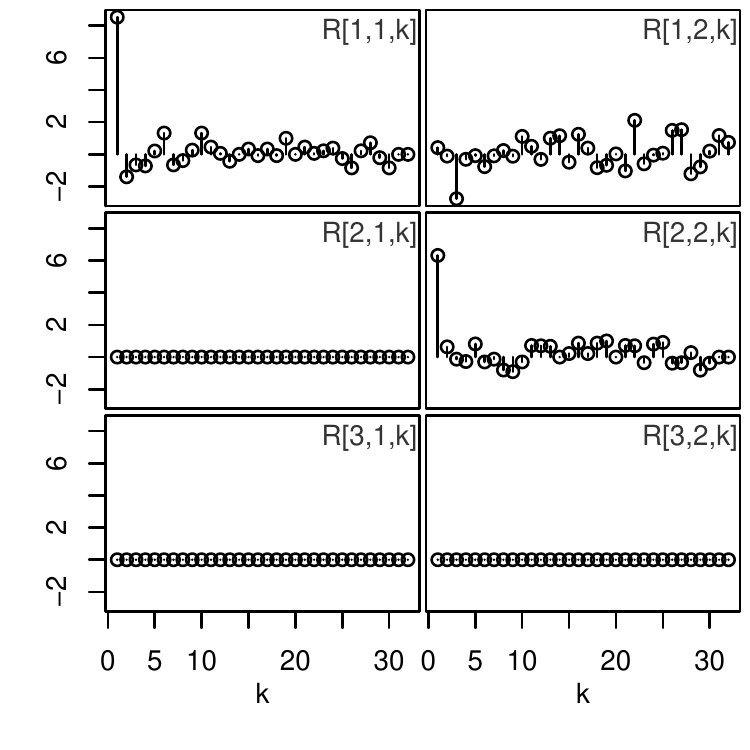}
\caption{\label{fig:QR}
\textbf{Top:} A Gaussian random matrix $\bA \in C\ell(4,1)^{3 \times 2}$.\\
\textbf{Left:} The $C\ell(4,1)$QR of $\bA$ computed using Algorithm~\ref{code:QR} with $\|\bullet\|_\infty$ and $\beta_\mathcal{B}$.\\
\textbf{Right:}  The $C\ell(4,1)$QR of $\bA$ computed using the approach of Section~\ref{sec:semisimple}, i.e. the $\setC$QR of a representation of $\bA$ in $\setC^{12 \times 8}$.}
\end{figure}
\begin{figure}[htbp]
\centering
\includegraphics[width=0.44\textwidth]{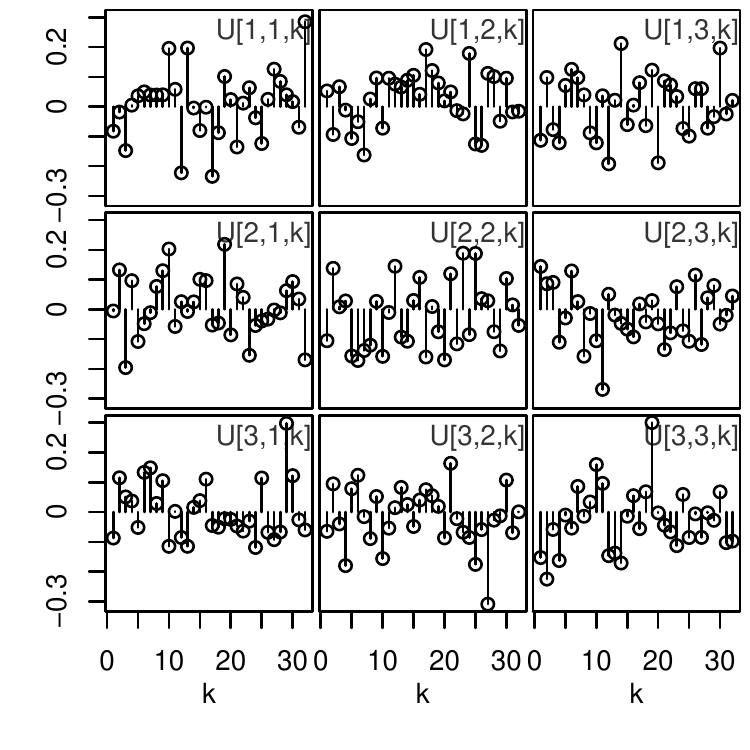}
\includegraphics[width=0.44\textwidth]{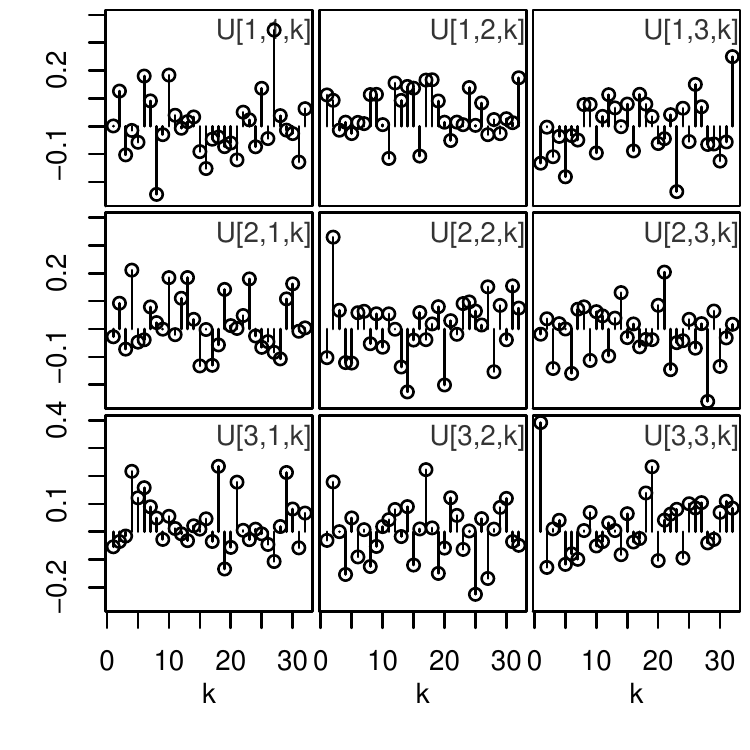}\\
\includegraphics[width=0.44\textwidth]{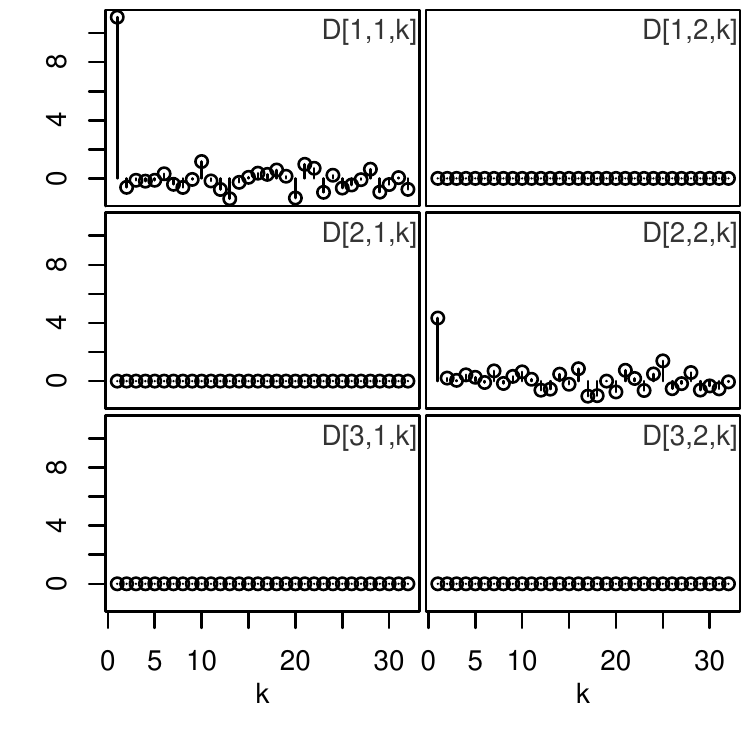}
\includegraphics[width=0.44\textwidth]{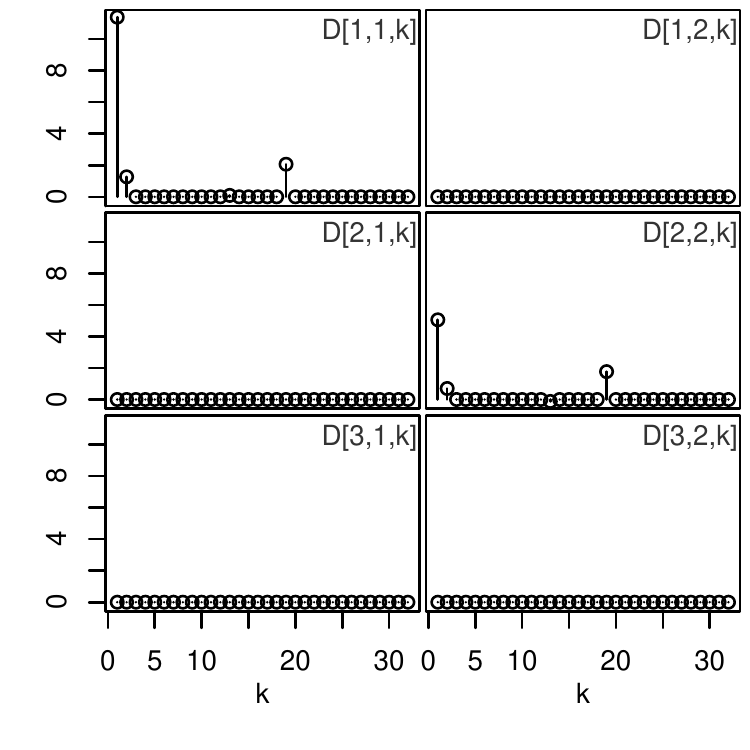}\\
\includegraphics[width=0.44\textwidth]{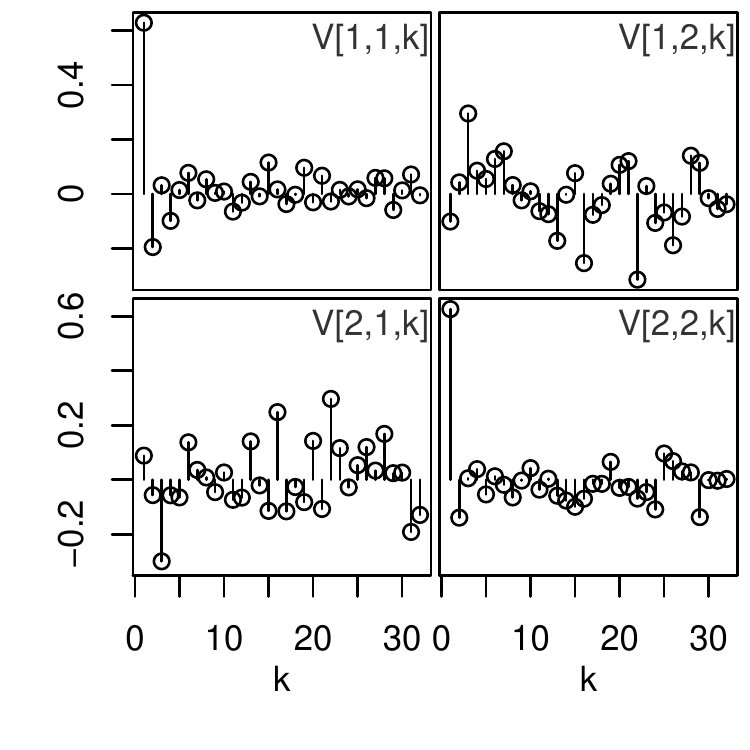}
\includegraphics[width=0.44\textwidth]{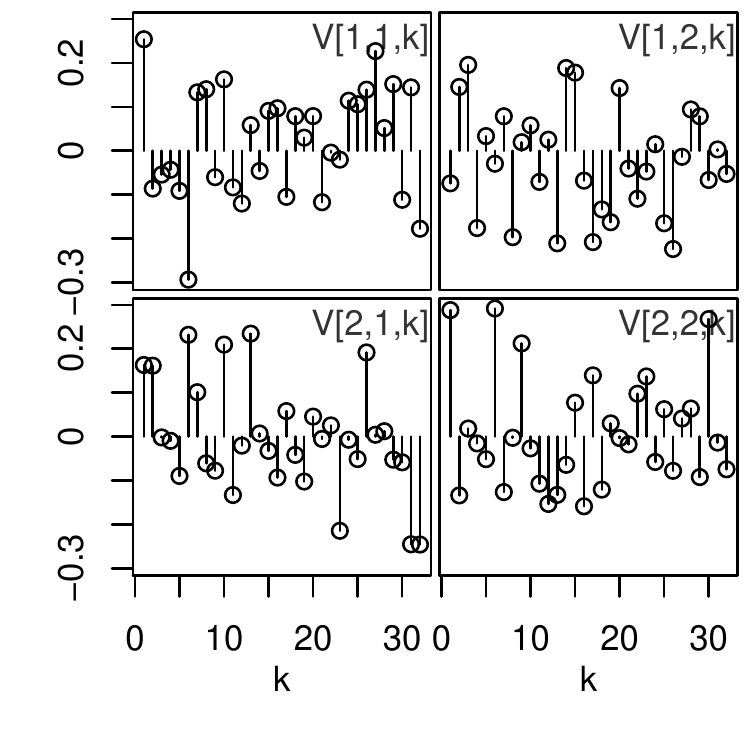}
\caption{\label{fig:SVD}
\textbf{Left:} The $C\ell(4,1)$SVD of $\bA$ computed using Algorithm~\ref{code:SVD} with $\|\bullet\|_\infty$ and $\beta_\mathcal{B}$.\\
\textbf{Right:}  The $C\ell(4,1)$SVD of the same matrix $\bA$ computed using the approach of Section~\ref{sec:semisimple}, i.e. the $\setC$SVD of a representation of $\bA$ in $\setC^{12 \times 8}$.}
\end{figure}
\begin{figure}[htbp]
\centering
\includegraphics[width=0.44\textwidth]{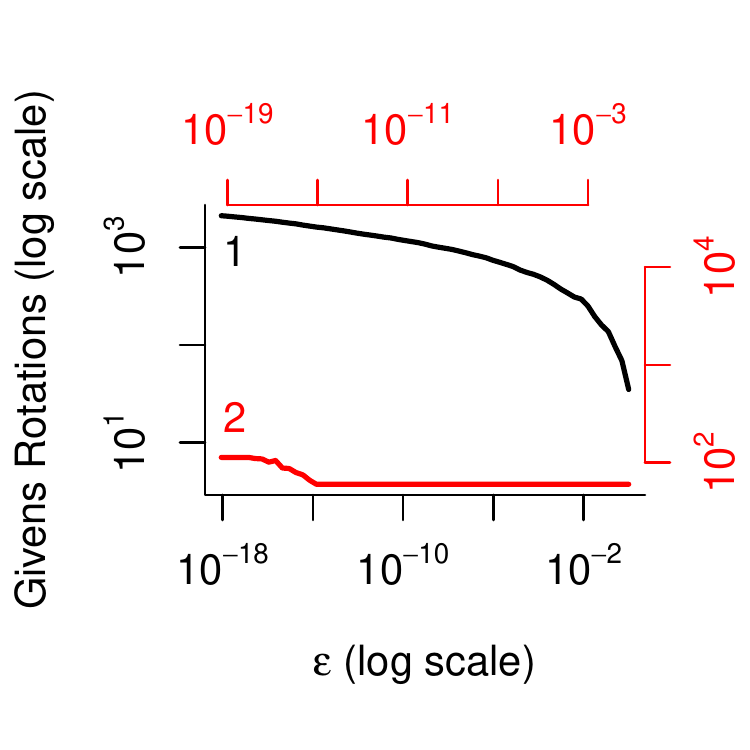}
\includegraphics[width=0.44\textwidth]{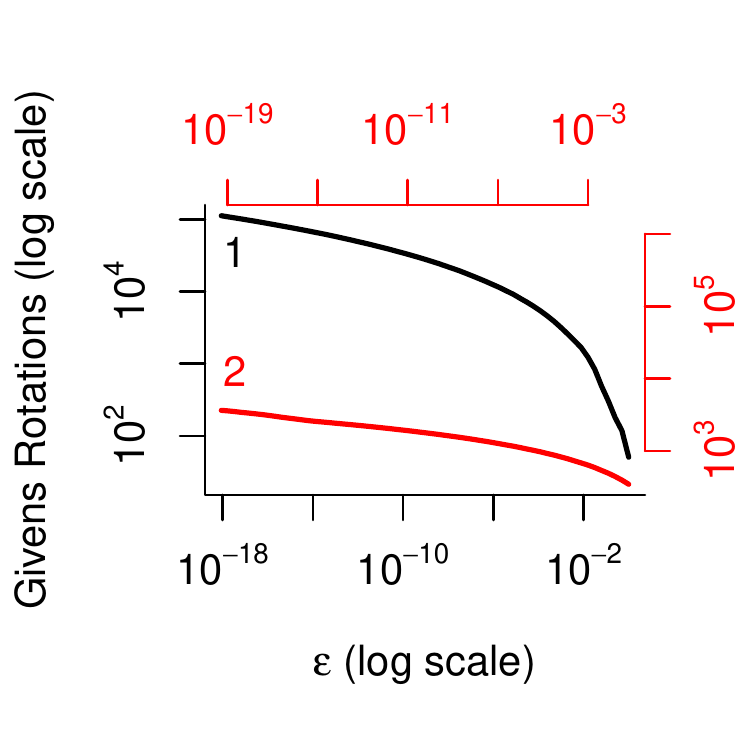}
\caption{\label{fig:rotations}
The number of Givens rotations needed to decompose $\bA \in C\ell(4,1)^{3 \times 2}$ as a function of the error tolerance $\epsilon$.\\
\textbf{Left:} Curve 1 corresponds to the $C\ell(4,1)$QRD computed using Algorithm~\ref{code:QR} with $\|\bullet\|_\infty$ and $\beta_\mathcal{B}$. Curve 2 corresponds to the $C\ell(4,1)$QRD computed using the approach of Section~\ref{sec:semisimple}.\\
\textbf{Right:} Curve 1 corresponds to the $C\ell(4,1)$SVD computed using Algorithm~\ref{code:SVD} with $\|\bullet\|_\infty$ and $\beta_\mathcal{B}$. Curve 2 corresponds to the $C\ell(4,1)$SVD computed using the approach of Section~\ref{sec:semisimple}.\\
Each $C\ell(4,1)$ Givens rotation involves about 16 times more operations than each $\setC$ Givens rotation. Also, when using the approach of Section~\ref{sec:semisimple} the error tolerance should be set to $\frac{\epsilon}{16}$ if one wishes to ensure that all coefficients below the diagonal are less than $\epsilon$ when transforming back to $C\ell(4,1)$. Hence for a fair comparison, Curve 2 is plotted on a different scale, corresponding to the top and right axes.
}
\end{figure}

One of the features of the $C\ell(4,1)$SVD obtained with the approach of Section~\ref{sec:semisimple} is that because the diagonal elements of $\bD$ must have a real diagonal representation in $\setC^{4 \times 4}$, they are restricted to lie in a 4-dimensional subspace of $C\ell(4,1)$, namely the subalgebra spanned by $\{\gamma_0,\gamma_1,\gamma_2\gamma_-,\gamma_1\gamma_2\gamma_-\}$. This feature is noticeable in the middle right subplot of Figure~\ref{fig:SVD}.
Similarly, because the diagonal elements of $\bR$ in a $C\ell(4,1)$QR obtained using the approach of Section~\ref{sec:semisimple} must have a representation which is upper triangular with real diagonal entries in $\setC^{4 \times 4}$, they are restricted to lie in a 10-dimensional subspace of $C\ell(4,1)$.

\section{Conclusion}
The computation of matrix decompositions such as the QRD, SVD, or symmetric EVD for matrices over an algebra $\setA$ has so far required the development of bespoke algorithms for each new algebra considered. We have described two general approaches to compute these matrix decompositions. The first approach generalises standard real, complex, quaternion and polynomial QR and SVD algorithms. It can be easily applied to (twisted) group algebras and in particular Clifford algebras in their standard basis. The second approach uses a representation of $\setA$ which reduces the matrix to a Kronecker sum of real, complex and quaternion matrices, which can then be decomposed in parallel. Although the Artin-Wedderburn theorem guarantees that this latter method is applicable to any finite-dimensional semi-simple algebra, finding the representation explicitly may not be straightforward for algebras other than $C\ell(p,q)$. The number of operations required for the first approach typically grows to infinity as the error tolerance $\epsilon$ tends to $0$, whereas this is not true of the second approach which is typically more computationally efficient. Another source of computationally efficiency for the second approach is that it can rely on more sophisticated $\setR/\setC/\setH$SVD algorithms, such as using Householder transformations \citep{sangwine_quaternion_2006}.

Although the approaches described here are general enough to cover the wide range of algebras used in signal processing, they do not apply to all real algebras. For example, the algebra of upper triangular matrices in $\setR^{m \times m}$ ($m>1$) is not semi-simple and does not admit any norm with a decent $\beta$.

\FloatBarrier
\section*{References}
\bibliographystyle{elsarticle-num} 
\bibliography{biblio}

\begin{thebibliography}{22}
\providecommand{\natexlab}[1]{#1}
\providecommand{\url}[1]{\texttt{#1}}
\expandafter\ifx\csname urlstyle\endcsname\relax
  \providecommand{\doi}[1]{doi: #1}\else
  \providecommand{\doi}{doi: \begingroup \urlstyle{rm}\Url}\fi

\bibitem[Albuquerque and Majid(2002)]{albuquerque_clifford_2002}
H.~Albuquerque and S.~Majid.
\newblock Clifford algebras obtained by twisting of group algebras.
\newblock \emph{Journal of Pure and Applied Algebra}, 171\penalty0
  (2–3):\penalty0 133--148, 2002.

\bibitem[Bales(2006)]{bales_properly_2006}
J.W. Bales.
\newblock Properly twisted groups and their algebras.
\newblock \emph{arXiv:1107.1297 [math]}, 2006.

\bibitem[Fan and Hoffman(1955)]{fan_metric_1955}
K.~Fan and A.J. Hoffman.
\newblock Some metric inequalities in the space of matrices.
\newblock \emph{Proceedings of the American Mathematical Society}, 6\penalty0
  (1):\penalty0 111--116, 1955.

\bibitem[Foster et~al.(2010)Foster, McWhirter, Davies, and
  Chambers]{foster_algorithm_2010}
J.A. Foster, J.G. McWhirter, M.R. Davies, and J.A. Chambers.
\newblock An algorithm for calculating the {QR} and singular value
  decompositions of polynomial matrices.
\newblock \emph{IEEE Transactions on Signal Processing}, 58\penalty0
  (3):\penalty0 1263--1274, 2010.

\bibitem[Gai et~al.(2014)Gai, Wan, Wang, and Yang]{gai_reduced_2014}
S.~Gai, M.~Wan, L.~Wang, and C.~Yang.
\newblock Reduced quaternion matrix for color texture classification.
\newblock \emph{Neural Computing and Applications}, 25\penalty0 (3-4):\penalty0
  945--954, 2014.

\bibitem[Gai et~al.(2015)Gai, Yang, Wan, and Wang]{gai_denoising_2015}
S.~Gai, G.~Yang, M.~Wan, and L.~Wang.
\newblock Denoising color images by reduced quaternion matrix singular value
  decomposition.
\newblock \emph{Multidimensional Systems and Signal Processing}, 26:\penalty0
  307--320, 2015.

\bibitem[Gong et~al.(2008)Gong, Liu, and Xu]{gong_quad-quaternion_2008}
X.~Gong, Z.~Liu, and Y.~Xu.
\newblock Quad-quaternion {Music} for {DOA} estimation using electromagnetic
  vector sensors.
\newblock \emph{EURASIP Journal on Advances in Signal Processing},
  2008:\penalty0 204:1--204:14, 2008.

\bibitem[Grillet(2007)]{grillet_abstract_2007}
P.A. Grillet.
\newblock \emph{Abstract {Algebra}}.
\newblock Graduate {Texts} in {Mathematics}. Springer, New York, NY, 2007.

\bibitem[Klappenecker and Rötteler(2003)]{klappenecker_unitary_2003}
A.~Klappenecker and M.~Rötteler.
\newblock Unitary error bases: {Constructions}, equivalence, and applications.
\newblock In M.~Fossorier, T.~Høholdt, and A.~Poli, editors, \emph{Applied
  {Algebra}, {Algebraic} {Algorithms} and {Error}-{Correcting} {Codes}}, pages
  139--149. Springer Berlin Heidelberg, 2003.

\bibitem[Lang et~al.(2012)Lang, Zhou, Cang, Yu, and
  Shang]{lang_self-adaptive_2012}
F.~Lang, J.~Zhou, S.~Cang, H.~Yu, and Z.~Shang.
\newblock A self-adaptive image normalization and quaternion {PCA} based color
  image watermarking algorithm.
\newblock \emph{Expert Systems with Applications}, 39\penalty0 (15):\penalty0
  12046--12060, 2012.

\bibitem[Le~Bihan et~al.(2007)Le~Bihan, Miron, and Mars]{le_bihan_music_2007}
N.~Le~Bihan, S.~Miron, and J.I. Mars.
\newblock {MUSIC} algorithm for vector-sensors array using biquaternions.
\newblock \emph{IEEE Transactions on Signal Processing}, 55\penalty0
  (9):\penalty0 4523--4533, 2007.

\bibitem[López-Permouth et~al.(2015)López-Permouth, Moore, Pilewski, and
  Szabo]{lopez-permouth_algebras_2015}
S.R. López-Permouth, J.~Moore, N.~Pilewski, and S.~Szabo.
\newblock Algebras having bases that consist solely of units.
\newblock \emph{Israel Journal of Mathematics}, 208\penalty0 (1):\penalty0
  461--482, 2015.

\bibitem[Marchuk and Shirokov(2008)]{marchuk_unitary_2008}
N.G. Marchuk and D.S. Shirokov.
\newblock Unitary spaces on {Clifford} algebras.
\newblock \emph{Advances in Applied Clifford Algebras}, 18\penalty0
  (2):\penalty0 237--254, 2008.

\bibitem[McWhirter et~al.(2007)McWhirter, Baxter, Cooper, Redif, and
  Foster]{mcwhirter_evd_2007}
J.G. McWhirter, P.D. Baxter, T.~Cooper, S.~Redif, and J.~Foster.
\newblock An {EVD} algorithm for para-{Hermitian} polynomial matrices.
\newblock \emph{IEEE Transactions on Signal Processing}, 55\penalty0
  (5):\penalty0 2158--2169, 2007.

\bibitem[Menanno and Le~Bihan(2010)]{menanno_quaternion_2010}
G.M. Menanno and N.~Le~Bihan.
\newblock Quaternion polynomial matrix diagonalization for the separation of
  polarized convolutive mixture.
\newblock \emph{Signal Processing}, 90\penalty0 (7):\penalty0 2219--2231, 2010.

\bibitem[Miron et~al.(2006)Miron, Le~Bihan, and
  Mars]{miron_quaternion-music_2006}
S.~Miron, N.~Le~Bihan, and J.I. Mars.
\newblock Quaternion-{MUSIC} for vector-sensor array processing.
\newblock \emph{IEEE Transactions on Signal Processing}, 54\penalty0
  (4):\penalty0 1218--1229, 2006.

\bibitem[Palais(1968)]{palais_classification_1968}
R.S. Palais.
\newblock The classification of real division algebras.
\newblock \emph{The American Mathematical Monthly}, 75\penalty0 (4):\penalty0
  366--368, 1968.

\bibitem[Sangwine and Le~Bihan(2006)]{sangwine_quaternion_2006}
S.J. Sangwine and N.~Le~Bihan.
\newblock Quaternion singular value decomposition based on bidiagonalization to
  a real or complex matrix using quaternion {Householder} transformations.
\newblock \emph{Applied Mathematics and Computation}, 182\penalty0
  (1):\penalty0 727--738, 2006.

\bibitem[Tian(1998)]{tian_universal_1998}
Y.~Tian.
\newblock Universal similarity factorization equalities over real {Clifford}
  algebras.
\newblock \emph{Advances in Applied Clifford Algebras}, 8\penalty0
  (2):\penalty0 365--402, 1998.

\bibitem[Verstraete et~al.(2002)Verstraete, Dehaene, and
  De~Moor]{verstraete_lorentz_2002}
F.~Verstraete, J.~Dehaene, and B.~De~Moor.
\newblock Lorentz singular-value decomposition and its applications to pure
  states of three qubits.
\newblock \emph{Physical Review A}, 65\penalty0 (3):\penalty0 032308, 2002.

\bibitem[Weisstein()]{weisstein_hadamard_2015}
E.W. Weisstein.
\newblock Hadamard {Matrix}.
\newblock \emph{MathWorld - A Wolfram Web Resource}.
\newblock URL \url{http://mathworld.wolfram.com/HadamardMatrix.html}.

\bibitem[Xiao et~al.(2014)Xiao, Zou, Xu, Tang, Wan, Liu, and
  Wan]{xiao_direction_2014}
H.K. Xiao, L.~Zou, B.G. Xu, S.L. Tang, Y.H. Wan, Y.L. Liu, and Q.~Wan.
\newblock Direction and polarization estimation with modified quad-quaternion
  {MUSIC} for vector sensor arrays.
\newblock In \emph{2014 12th {International} {Conference} on {Signal}
  {Processing} ({ICSP})}, pages 352--357, 2014.

\end{thebibliography}

\end{document}